\def\h{ {\cal H} }
\def\l{ {\cal L} }
\def\a{ {\cal A} }
\def\b{ {\cal B} }
\def\u{ {\cal U} }
\def\v{ {\cal V} }
\def\ii{ {\cal I} }
\def\t{ {\cal T} }
\def\s{ {\cal S} }
\def\p{ {\cal P} }
\def\k{ {\cal K} }
\def\d{ {\cal D} }
\def\o{ {\cal O} }
\def\XX{ {\bf X} }
\def\TT{ {\bf T} }
\def\SS{ {\bf S} }
\def\BB{ {\bf B} }
\def\AA{ {\bf A} }
\def\GG{ {\bf G} }
\def\PP{ {\bf P} }
\def\QQ{ {\bf Q} }
\def\ZZ{ {\bf Z} }
\def\VV{ {\bf V} }
\def\d{\displaystyle}
\def\HH{{\mathcal{H}}}
\newtheorem{teo}{Theorem}[section]
\newtheorem{prop}[teo]{Proposition}
\newtheorem{lem}[teo]{Lemma}
\theoremstyle{definition}
\newtheorem{rem}[teo]{Remark}
\newtheorem{ejem}[teo]{Example}
\newtheorem{ejems}[teo]{Examples}
\title{Operators which preserve a positive definite inner product}
\author{E. Andruchow\footnote{{\sc  {Instituto Argentino de Matem\'atica, `Alberto P. Calder\'on', CONICET, Saavedra 15 3er. piso,
(1083) Buenos Aires, Argentina }} and {{\sc Universidad Nacional de General Sarmiento, J.M. Gutierrez 1150, (1613) Los Polvorines, Argentina}} e-mail: eanddruch@campus.ungs.edu.ar}
%and M.E. Di Iorio y Lucero
%\footnote{{\sc  {Instituto Argentino de Matem\'atica, `Alberto P. Calder\'on', CONICET, Saavedra 15 3er. piso,
%(1083) Buenos Aires, Argentina}} e-mail: eugenia.diiorio@gmail.com}
}
\begin{document}

\maketitle 

\begin{abstract}
Let $\h$ be a Hilbert space, $A$  a positive definite operator in $\h$ and  $\langle f,g\rangle_A=\langle Af,g\rangle$, $f,g\in\h$, the $A$-inner product. This paper studies the geometry of the set
$$
\ii_A^a:=\{\hbox{ adjointable isometries for } \langle \ , \ \rangle_A\}.
$$
It is proved that $\ii_A^a$ is a submanifold of the Banach algebra of adjointable operators, and a homogeneous space of the group of invertible operators in $\h$, which are unitaries for the $A$-inner product. Smooth curves in $\ii_A^a$ with given initial conditions, which are minimal for the metric induced by $\langle \ , \ \rangle_A$, are presented. This result depends on an adaptation of M.G. Krein's extension method of symmetric contractions, in order that it works also for symmetrizable transformations (i.e., operators which are selfadjoint for the $A$-inner product).
\end{abstract}

\bigskip

{\bf 2020 MSC:}  47A05, 47A62, 58B20, 58B10

{\bf Keywords:} $A$-isometries, $A$-unitaries, compatible subspaces, symmetrizable transformations.

\section{Introduction}
Let $A$ be a positive contraction in the Hilbert space $(\h,\langle \  \ , \ \ \rangle)$ with trivial nullspace $N(A)=\{0\}$. Denote by $\langle\ \ , \ \ \rangle_A$ the inner product defined by $A$:
$$
\langle f,g\rangle_A=\langle Af,g\rangle, \ f,g\in\h.
$$
This paper considers the operators which preserve this inner product, which will be called  $A$-isometries. 
That is,  a bounded operator $T\in\b(\h)$ is called an $A$-isometry if 
$$
\langle ATf,Tg\rangle=\langle A f,g\rangle,
$$
or equivalently,  $T^*AT=A$. The focus will be on $A$-isometries which additionally admit an adjoint for the $A$-inner product: 
 there exists $S\in\b(\h)$  such that $\langle Tf,g\rangle_A=\langle f, Sg\rangle_A$, for $ f,g\in\h$. 
 In general, an operator $B$ in $\h$ will be called $A$-{\it adjointable} (or just {\it adjointable}) if it admits an adjoint for the $A$-inner product, which  shall be  denoted by $C=A^\sharp$. Denote by
$$
\ii_A:=\{T\in\b(\h): T \hbox{ is } A-\hbox{isometric}\}
$$
and
\begin{equation}\label{conjunto}
\ii_A^a:=\{T\in\ii_A: T \hbox{ is } A-\hbox{adjointable}\}
\end{equation}

It will be shown that there are $A$-isometries which are not $A$-adjointable. It is the latter set $\ii_A^a$ which admits a differentiable structure, and which is a homogeneous space of a Banach-Lie group. Moreover, endowed with a natural Finsler metric (namely, the one induced by he $A$-inner product),  curves which have minimal length for given initial conditions are computed.

The contents of the paper are the following. In Section 2  the basic facts needed are stated, on operators in Hilbert spaces with two norms (these facts are taken from \cite{krein}, \cite{lax}, \cite{gohberg zambickii});   examples of adjointable and non adjointable $A$-isometries are presented. Also Theorem \ref{proposicion 22} is proved, characterizing adjointable $A$-isometries. A result by R. Douglas \cite{douglas} is used, on existence of solutions of the operator equation $AX=B$. In Section 3  the Wold decomposition of $A$-isometries is briefly analized. Section 4 contains Theorem \ref{estructura diferenciable}, stating  that $\ii_A^a$ is a $C^\infty$-submanifold of the $*$-Banach algebra of adjointable operators (with a suitable norm), and a homogeneous space of the Banach-Lie group of $A$-unitary operators (see the definition below). Section 5 presents an adaptation of   Krein's method for the extension of symmetric transformations with norm constraints, to work in the context of symmetrizable transformations (Lemma \ref{krein method}). We believe that this section is of independent interest. This result is used in Section 6 to prove Theorem \ref{minimalidad}, which computes minimal curves in $\ii_A^a$ satisfying given initial conditions. Section 7 treats the action of the restricted group of $A$-unitaries,  the orbits of the action of this group are characterized (Theorem \ref{orbitas restringidas}), and it is proved that these are also $C^\infty$-manifolds and homogeneous spaces (Proposition \ref{variedad restringida}).

Let us finish this section with basic facts and notations.
Denote by $\l$ the completion of the pre-Hilbert space $(\h,\langle \ \ , \ \ \rangle_A)$. Note that $T\in\ii_A$ extends to an isometry $\TT$ acting in $\l$ (denote the inner product on $\l$ by $\langle\ \ , \ \ \rangle_\l$). Conversely, an isometry $\VV\in\b(\l)$  that   leaves the dense subspace $\h$ invariant, i.e. $\VV(\h)\subset\h$,  induces an operator $V=\VV|_\h$ in $\h$, which is an $A$-isometry.

This study relies on the theory of symmetrizable operators, (or more broadly, operators in Hilbert spaces with two norms), developed initially (and independently) by M.G. Krein \cite{krein} and P.D. Lax \cite{lax}, and extended afterwards by I.C.  Gohberg and M.K. Zambickii \cite{gohberg zambickii}.

Upper case letters $T, S, X, G, \dots$ will denote operators acting in $\h$ (with adjoints in $\h$ denoted $T^*,S^*, X^*, G^*,\dots$). Their eventual extensions to $\l$  will be denoted with capital {\bf bold} letters: $\TT, \SS, \XX, \GG,\dots$, and adjoints $\TT^*, \SS^*, \XX^*, \GG^*,\dots$ (in $\l$). That is, which adjoint is refered to, will depend on the context. Vectors in $\h$ will be denoted $f,g,h$ and  vectors in $\l$ with Greek letters $\varphi, \gamma, \eta$.

A special class of $A$-isometries, is given by the $A$-unitary operators: $G\in\b(\h)$ is called $A$-unitary if it is an $A$-isometry which is invertible in $\h$. Denote by 
$$
\u_A=\{ G\in\b(\h): G \hbox{ is invertible and } G^*AG=A\},
$$
the group of $A$-unitaries. Note that $A$-unitaries are adjointable. In \cite{grupo dos normas} it was shown that $\u_A$ is a $C^\infty$ Banach-Lie group. Clearly, $\u_\a$ acts on $\ii_A$ and on $\ii_A^a$ by left multiplication:
$$
G\cdot T=GT\in\ii_A \ , \  \hbox{ if }   G\in\u_A \hbox{ and } T\in\ii_A,
$$ 
and clearly $GT$ is adjointable if $T$ is adjointable.
Again, $G\in\u_A$ extends to a unitary operator $\GG$ such that $\GG(\h)=\h$.

\section{Symmetrizable operators}

Denote by $\sigma_\h(T)$ the spectrum of $T$ as an operator in $\h$, and by $\sigma_\l(\TT)$ the spectrum of its extension to $\l$ (when it exists). We will say that  $\lambda$  belongs to the point spectrum of $T$  if $0<\dim(N(T-\lambda 1))$, and $\lambda$ is said to have finite multiplicity if this dimension is finite. An operator $B$ acting in $\h$ will be called $A$-symmetric (or symmetrizable) if it is symmetric for the $A$-inner product. Let us recall the following facts, adapted from their original broader context to our case:
\begin{teo} {\rm (See M.G.Krein \cite{kreindosnormas}, P.D. Lax \cite{lax})} Let $B$ be an $A$-symmetric operator. The following assertions hold:
\begin{enumerate}
\item
$\BB$ exists and is bounded selfadjoint operator in $\l$.
\item
$\sigma_\l(\BB)\subset\sigma_\h(B)$.
\item
If $\lambda$ belongs to the point spectrum of $B$ as an operator in $\h$, then $\lambda$ belongs to the point spectrum of $\BB$ as an operator in $\l$. Moreover, if $\lambda$ has finite multiplicity, then the $\lambda$-eigenspaces over $\h$ and $\l$ are the same.
\item
If $B$ is a compact operator in $\h$, then $\BB$ is a compact operator in $\l$ and $\sigma_\l(\BB)=\sigma_\h(B)$.
\end{enumerate}
\end{teo}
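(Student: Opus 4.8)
The plan is to obtain all four assertions from one structural fact — that $B$, being $A$-symmetric and bounded on $\h$, extends to a bounded operator $\BB$ on $\l$ with $\|\BB\|_\l\le\|B\|$ — after which (2), (3), (4) follow by soft arguments using the dense continuous inclusion $\h\hookrightarrow\l$ (recall $\|f\|_A\le\|f\|$, since $A$ is a contraction). For (1) I would argue as follows: $AB=B^*A$ forces every power $B^k$ to be $A$-symmetric and $(B^k)^*AB^k=AB^{2k}$, so for fixed $f\in\h$ the numbers $a_k:=\|B^kf\|_A$ satisfy $a_k^2=\langle B^{2k}f,f\rangle_A\le a_{2k}a_0$. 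Iterating along $k=2^j$ gives $a_1^{2^j}\le a_0^{2^j-1}a_{2^j}$, hence $a_1\le a_0^{1-2^{-j}}\bigl(\|B\|^{2^j}\|f\|\bigr)^{2^{-j}}$, and letting $j\to\infty$ yields $\|Bf\|_A\le\|B\|\,\|f\|_A$. Therefore $B$ extends to a bounded $\BB$ on $\l$, and passing to the limit in $\langle Bf,g\rangle_A=\langle f,Bg\rangle_A$ over sequences in $\h$ shows $\BB=\BB^*$.

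For (2): since $\BB$ is selfadjoint, $\sigma_\l(\BB)\subset\RR$, so it suffices to show a real $\lambda\notin\sigma_\h(B)$ is not in $\sigma_\l(\BB)$. For such $\lambda$ the operator $(B-\lambda)^{-1}\in\b(\h)$ is again $A$-symmetric (as is checked by substituting $f=(B-\lambda)u$, $g=(B-\lambda)v$), hence by (1) it extends to a bounded $R$ on $\l$; as $R(\BB-\lambda)$ and $(\BB-\lambda)R$ both restrict to the identity on the dense subspace $\h$, $\lambda\notin\sigma_\l(\BB)$.

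For (3): if $Bf=\lambda f$ with $0\ne f\in\h$, then $\langle Bf,f\rangle_A=\langle f,Bf\rangle_A$ is real, so $\lambda\in\RR$, and $\BB f=\lambda f$ in $\l$ with $f\ne0$; this gives $N(B-\lambda 1)\subseteq N(\BB-\lambda 1)$. When $E:=N(B-\lambda 1)$ is finite dimensional I would first build the $A$-orthogonal projection $P$ onto $E$ (possible because $\langle\cdot,\cdot\rangle_A$ is an inner product on the finite-dimensional $E$): it is a bounded idempotent on $\h$, is $A$-selfadjoint, and commutes with $B$, because $B$ leaves both $E$ and $E^{\perp_A}\cap\h$ invariant ($\lambda$ being real). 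Its extension $\PP$ is the $\l$-orthogonal projection onto $E$ and commutes with $\BB$, so writing $\BB=\lambda\PP+\BB(1-\PP)$ reduces the claim $N(\BB-\lambda 1)=E$ to the statement that the extension of the bounded $A$-symmetric operator $B|_{E^{\perp_A}\cap\h}$, which has no $\lambda$-eigenvector over $\h$, has no $\lambda$-eigenvector over $\l$ either. This last step is the substantive part — and I expect it to be the main obstacle: one must rule out eigenvectors created by the completion, i.e. lying in $\l\setminus\h$, and here the boundedness of $B$ in the \emph{original} norm of $\h$ must be used; the finiteness of the multiplicity is essential precisely because it is what makes $P$ bounded and the above reduction legitimate. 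I would carry out this step following Lax \cite{lax} (and Krein \cite{kreindosnormas}).

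For (4): by the Riesz theory of compact operators $\sigma_\h(B)\setminus\{0\}$ consists of eigenvalues of finite multiplicity accumulating only at $0$; these are real by (3), and eigenspaces for distinct eigenvalues are $A$-orthogonal. Given $\epsilon>0$, let $E_\epsilon$ be the finite-dimensional span of the eigenspaces with $|\mu|\ge\epsilon$, $P_\epsilon$ the associated bounded $A$-orthogonal projection (commuting with $B$ as above) and $\PP_\epsilon$ its extension; then $\BB=\BB\PP_\epsilon+\BB(1-\PP_\epsilon)$, with $\BB\PP_\epsilon$ of finite rank and $\BB(1-\PP_\epsilon)$ the $\l$-selfadjoint extension of $B(1-P_\epsilon)$. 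Since $B(1-P_\epsilon)$ vanishes on $E_\epsilon$ and restricts on the complementary $B$-invariant subspace to a compact operator all of whose eigenvalues have modulus $<\epsilon$, its $\h$-spectrum lies in $\{|\mu|\le\epsilon\}$, so by (2) and selfadjointness $\|\BB(1-\PP_\epsilon)\|_\l\le\epsilon$. Hence $\BB$ is a norm limit of finite-rank operators, so it is compact. Finally $\sigma_\l(\BB)\subset\sigma_\h(B)$ by (2), every nonzero point of $\sigma_\h(B)$ lies in $\sigma_\l(\BB)$ by (3), and $0$ lies in both spectra unless $\l$ is finite dimensional (where $\h=\l$ and the statement is trivial); therefore $\sigma_\l(\BB)=\sigma_\h(B)$.
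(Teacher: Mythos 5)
The paper does not actually prove this theorem: it is stated as recalled background, credited to Krein \cite{kreindosnormas} and Lax \cite{lax}, so there is no in-paper argument to compare yours with. Judged on its own, your proposal is correct and essentially complete for item (1), item (2), the first claim of item (3), and item (4). Your proof of (1) is exactly Lax's classical power-iteration argument ($a_1^{2^j}\le a_0^{2^j-1}a_{2^j}$ plus $\|g\|_A\le\|g\|$), and your item (4) deserves credit for being genuinely self-contained: it uses only (2) and the easy half of (3), not the unproven ``moreover'' clause, so the compactness of $\BB$ and the equality $\sigma_\l(\BB)=\sigma_\h(B)$ are fully established by your argument.

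The one genuine gap is the second assertion of item (3), the equality of the $\lambda$-eigenspaces over $\h$ and $\l$ when the multiplicity is finite. Your reduction via the $A$-orthogonal projection $P$ onto $E=N(B-\lambda 1)$ is sound ($P$ is bounded on $\h$ because $E$ is finite dimensional, is $A$-selfadjoint, commutes with $B$ since $\lambda$ is real, and extends to the $\l$-orthogonal projection onto $E$), but it only converts the problem into the statement that an $A$-symmetric operator with no $\lambda$-eigenvector in $\h$ acquires none in the completion $\l$ --- which is the same kind of assertion, in the multiplicity-zero case, and is precisely where the real work in Krein's and Lax's papers lies. You acknowledge this and defer to the literature, so as written the proposal proves (1), (2), (4) and half of (3) and cites the rest; a self-contained proof would still have to supply the mechanism that rules out eigenvectors of $\BB$ lying in $\l\setminus\h$. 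Since the paper itself imports the entire theorem from \cite{kreindosnormas} and \cite{lax}, this is not a defect relative to the paper, but it should be flagged as the one step your argument does not close.
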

\begin{rem}\label{gohberg}
I.C.  Gohberg and M.I. Zambickii \cite{gohberg zambickii} extended these results to operators in Banach spaces with two norms. In particular, in our context, they proved that if an operator $B$ in $\h$ is compact and $A$-adjointable (they called adjointable operators {\it proper}), the its extension $\BB$ to $\l$ is also compact, with the same point spectrum, and also the same eigenspaces of finite multiplicity.
\end{rem}
Denote by
\begin{equation}\label{adjuntables}
\b_A(\h)=\{B\in\b(\h): B \hbox{ is } A-\hbox{adjointable}\}
\end{equation}
If $A$ is not invertible, $\b_A(\h)$ is not closed in $\b(\h)$, neither is the $A$-adjoint map
$B\mapsto B^\sharp$  a continuous map (in the norm topology).  $\b_A(\h)$ is endowed with he norm
$$
|B|:=\max\{\|B\|,\|B^\sharp\|\}.
$$
Then $(\b_A(\h), |\ \ |)$ becomes an involutive Banach algebra.
\begin{rem}
It is not difficult to see that if $B$ is $A$-symmetric, then $\|\BB\|\le\|B\|$. Also note that $A$ itself is $A$-symmetric, and that its extension $\AA$ remains positive definite.
\end{rem}

A closed linear subspace $\s\subset\h$ is called {\it compatible with} $A$,  $A$-{\it compatible}, or shortly {\it compatible},  if it admits a supplement which is orthogonal with respect to the inner product defined by $A$. In \cite{compatible grassmannian}, the compatible Grassmannian was studied, namely
$$
Gr_A = \{\s\subset\h:  \s \hbox{  is closed and compatible with } A\}.
$$
Since $A$ has trivial nullspace, if $S$ is compatible with $A$, then the supplement is unique, and it is given by $A(\s)^\perp$. This allows us to identify each compatible subspace $\s$ with the idempotent $Q_\s$  with range $\s$  and nullspace $A(\s)^\perp$. Thus the compatible Grassmannian may be regarded as the following set
$$
Gr_A =\{Q \in \b(\h): Q^2 = Q, Q^*A=AQ\}
$$
$$
=\{Q\in\b(\h): Q^2=Q \hbox{ is adjointable and symmetrizable}\}.
$$
In other words, $\QQ=P_{\overline{R(Q)}}$.
Consider in $Gr_A$ the  topology inherited from the norm of $\b_A(\h)$. The proof of the afore-mentioned facts and examples of compatible and non-compatible subspaces can be found in \cite{cms1}, \cite{cms2}, where a systematic study of compatible subspaces was done. The notion of compatible subspaces goes back to A. Sard \cite{sard}, who introduced an equivalent definition under a different terminology, to give an operator theoretic approach to problems in approximation theory (see \cite{cgm}). In \cite{compatible grassmannian} it was shown that $Gr_A$ is a complemented submanifold of $\b_A(\h)$, and an homogeneous space of the group $\u_A$ under the action 
%$$
%G\cdot \s=G(\s) \hbox{ or equivalently } G\cdot Q_\s=GQ_\s G^{-1}, G\in\u_A, \s\in Gr_A.
%$$

%
$$G\cdot \s=G(\s) $$
or equivalently
$$G\cdot Q_\s=GQ_\s G^{-1},
\  \ G\in\u_A,
\ \s\in Gr_A.$$

An $A$-isometry may not be an adjointable operator. \begin{ejems}

\noindent

\begin{enumerate}
\item
Let $\l=H^2(\mathbb{T})$ the Hardy space of the unit circle $\mathbb{T}$, $\h=\d$ the Dirichlet space of the unit disk $\mathbb{D}$, i.e.,
$$\l=\{f:\mathbb{D}\to \mathbb{C}: f \hbox{ is analytic and } \sum_{n=0}^\infty |\hat{f}(n)|^2<\infty\} ,$$
$$\h=\{f\in\l: \sum_{n=0}^\infty (n+1) |\hat{f}(n)|^2<\infty\}.$$
Here the inner product of $\h$ is 
$$
\langle f,g\rangle=\sum_{n=0}^\infty (n+1)\hat{f}(n)\overline{\hat{g}(n)},
$$ 
and the inner product of $\l$ is 
$$
\langle f,g\rangle=\sum_{n=0}^\infty \hat{f}(n)\overline{\hat{g}(n)}, 
$$
in both cases for $f,g\in\h$. So that $A\in\b(\h)$ is given by
$$
Af(z)=\sum_{n=0}^\infty \frac{1}{n+1}\hat{f}(n)z^n.
$$
Note that $A$ is compact.
Let $\TT=M_z\in\b(\l)$ be the shift operator. Clearly $\TT(\h)\subset \h$ and $\TT^*(\h)\subset\h$, so that $T=\TT|_\h$ is an adjointable $A$-isometry. In particular,  see below (Theorem \ref{proposicion 22}),  this means that $R(T)=z \h$ is a compatible subspace.
\item
Consider the slight modification of  the above example. Put $\l=\ell^2$ and $\h=\{(x_n)\in\ell^2: \sum_{n=1}^\infty n|x_n|^2<\infty\}$, with its natural inner product $\langle(x_n),(y_n)\rangle_{_\h}=\sum_{n=1}^\infty n x_n \bar{y}_n$. Let $e_n$, $n\ge 1$ be the canonical orthonormal basis of $\ell^2$. Let $\mathbb{A}$ the subset of the natural numbers which are squares of odd integers,
$$
\mathbb{A}:=\{(2k+1)^2: k\ge 0\},
$$

and denote its complement 
$$
\mathbb{A}^c=\{\sigma(1)<\sigma(2)<\sigma(3)<\dots\}.
$$
Let $U^*$ be the unitary operator in $\ell^2$ given by
$$
U^*e_n=\left\{ \begin{array}{cc} e_{n^2} & \hbox{ if } n \hbox{ is odd,} \\
e_{\sigma(n/2)} & \hbox{ if } n \hbox{ is even.} \end{array} \right.
$$
Clearly $\{U^*e_n: n\ge 1\}=\{e_n: n\ge 1\}$, so that $U^*$ is indeed a unitary operator in $\ell^2$. Its adjoint $U$ is given by (for $x=(x_n)\in\ell^2$)
$$
(Ux)_n=\left\{ \begin{array}{cc} x_{n^2} & \hbox{ if } n \hbox{ is odd}, \\
x_{\sigma(n/2)} & \hbox{ if } n \hbox{ is even.} \end{array} \right.
$$
We claim that $U(\h)\subset\h$, but $U^*(\h)$ is not contained in $\h$. Which means that $U|_\h\in \ii_A$, but it is not $A$-adjointable.
For the first assertion, note the fact that for all $k\ge 1$, $k\le\sigma(k)\le 2k$.
Then, if $(x_n)\in\h$,
$$\begin{array}{cclcl}
\langle Ux,Ux\rangle_{_\h} &=&
\sum\limits_{n \ \textnormal{\scriptsize{odd}}} n|x_{n^2}|^2 &+&
\sum\limits_{n \ \textnormal{\scriptsize{even}}} n |x_{\sigma(n/2)} |^2 \\
&\leq& \sum\limits_{n \ \textnormal{\scriptsize{odd}}} n^2|x_{n^2}|^2
&+&
\sum\limits_{n \ \textnormal{\scriptsize{even}}} 2\sigma(n/2) |x_{\sigma(n/2)}|^2\\
&\leq& 2 \langle x,x\rangle_{_\h}. & 
\end{array}$$
To prove that $\h$ is not invariant for $U^*$, pick
%$x_k = \frac{1}{k^{\frac{3}{2}}}$
%$x_k = 1/k^{\frac{3}{2}}$
$x_k = \frac{1}{k^{3/2}}$
if $k$ is odd
and $x_k=0$ otherwise.
%That is,
%$$x= (1, 0, \frac{1}{3^{\frac{3}{2}}},0,\frac{1}{5^{\frac{3}{2}}},0, \ldots)$$
%
Then clearly, $x = (x_n) \in \HH$,
%$$\langle x,x\rangle_{\HH}= 
%\d\sum_{k=0}\frac{1}{(2k+1)^2} = 
%\frac{\pi^2}{8}$$
and
$$(U^*x)_n=\left\{
 \begin{array}{cc}
\frac{1}{n^{3/4}} & \hbox{ if } n \in \mathbb{A}, \\
0 & \hbox{ if } n \in \mathbb{A}^c
 \end{array} \right. $$
Or equivalently,
$(U^*x)_{(2k+1)^2} = \frac{1}{(2k+1)^{3/2}}$
and equal to $0$ in all other entries.
Then
$$\langle U^*x, U^*x\rangle_{\HH} = 
%\sum_m m (U^*x)_{m} (U^*x)_{m}  =
%\sum_{k=0} (2k+1)^2 \frac{1}{(2k+1)^{3/2}} \frac{1}{(2k+1)^{3/2}}  =
\sum_{k=0} (2k+1)^2 \left( \frac{1}{(2k+1)^{3/2}} \right)^2 =
\sum_{k=0}\frac{1}{(2k+1)}=+\infty.$$

\end{enumerate}
\end{ejems}

Consider the subclass of $A$-adjointable $A$-isometries,
or shortly, {\it adjointable isometries}. Operators $G\in\u_A$ are examples of adjointable isometries. It is proved below  that adjointable isometries are those with compatible final spaces. It will be useful to recall the following result by R. Douglas \cite{douglas}:
\begin{rem}\label{resultado douglas}
Douglas' theorem considers the existence of solutions of the operator equation $AX=B$ \cite{douglas}: let $A,B\in\b(\h)$,  then the  following conditions are equivalent:
\begin{enumerate}
\item
there exists $X\in\b(\h)$ such that $AX=B$;
\item
$R(B)\subset R(A)$;
\item
there exists $\lambda>0$ such that $BB^*\le \lambda AA^*$.
\end{enumerate}
\end{rem}

\begin{teo}\label{proposicion 22}
Let $T\in\ii_A$. Denote by $\TT$ its (isometric) extension to $\l$. The following are equivalent:
\begin{enumerate}
\item $T$ is $A$-adjointable;
\item$\TT^*(\h)\subset\h$;
\item
$R(T)$ is a compatible subspace. 
\item
$R(T^*A)=R(A)$;
\item
there exists $\lambda>0$ such that 
$$
T^*A^2T\le \lambda A^2.
$$
\end{enumerate}
\end{teo}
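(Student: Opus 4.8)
The plan is to run the equivalences through the pattern $(1)\Leftrightarrow(2)$, $(2)\Leftrightarrow(4)$, $(4)\Leftrightarrow(5)$ and $(1)\Leftrightarrow(3)$, constantly translating between the $A$-inner product on $\h$ and the honest inner product $\langle\ ,\ \rangle_\l$ of the completion $\l$, the bridge being $\langle f,g\rangle_A=\langle\TT f,g\rangle_\l$ for $f,g\in\h$. For $(1)\Leftrightarrow(2)$: given $S\in\b(\h)$, the identity $\langle Tf,g\rangle_A=\langle f,Sg\rangle_A$ for all $f,g\in\h$ reads $\langle f,\TT^*g\rangle_\l=\langle f,Sg\rangle_\l$, and since $\h$ is dense in $\l$ this is equivalent to $\TT^*g=Sg$ for every $g\in\h$; hence an $A$-adjoint of $T$ can exist only when $\TT^*(\h)\subset\h$, and when it does, $S:=\TT^*|_\h$ is automatically $\h$-bounded by the closed graph theorem (using that $\h\hookrightarrow\l$ is continuous because $A$ is a contraction, and that $\TT^*$ is $\l$-bounded).

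For $(2)\Leftrightarrow(4)$: fix $g\in\h$ and test against $h\in\h$: $\langle h,\TT^*g\rangle_\l=\langle Th,g\rangle_A=\langle ATh,g\rangle=\langle h,T^*Ag\rangle$, so $\TT^*g\in\h$ if and only if there is $v\in\h$ with $Av=T^*Ag$ (unique, since $N(A)=\{0\}$). Thus $(2)$ is equivalent to $R(T^*A)\subset R(A)$; and since $T^*AT=A$ forces $R(A)=R(T^*AT)\subset R(T^*A)$, this inclusion is the same as the equality $R(T^*A)=R(A)$ of $(4)$. Then $(4)\Leftrightarrow(5)$ is Douglas' theorem (Remark \ref{resultado douglas}) applied to $AX=T^*A$: the inclusion $R(T^*A)\subset R(A)$ is equivalent to the existence of $\lambda>0$ with $(T^*A)(T^*A)^*\le\lambda AA^*$, i.e. $T^*A^2T\le\lambda A^2$.

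For $(1)\Leftrightarrow(3)$: if $T$ is adjointable with $A$-adjoint $T^\sharp$, then $T^\sharp T=1$, because $\langle x,T^\sharp Tf\rangle_A=\langle Tx,Tf\rangle_A=\langle x,T^*ATf\rangle=\langle x,f\rangle_A$ for all $x,f\in\h$ and $\langle\ ,\ \rangle_A$ is positive definite. Hence $Q:=TT^\sharp$ is idempotent; $Q(Tf)=T(T^\sharp Tf)=Tf$ gives $R(Q)=R(T)$, and $(T^\sharp)^\sharp=T$ gives $Q^\sharp=(T^\sharp)^\sharp T^\sharp=Q$, so $Q=Q_{R(T)}$ is an $A$-selfadjoint idempotent and $R(T)\in Gr_A$. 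Conversely, if $R(T)$ is compatible it is in particular closed, so $T\colon\h\to R(T)$ is a bounded linear bijection of Banach spaces (injectivity holds because $\|Tf\|_A=\|f\|_A$ and $N(A)=\{0\}$), hence has bounded inverse $T^{-1}\colon R(T)\to\h$ by the open mapping theorem; setting $S:=T^{-1}Q_{R(T)}$ and using that $N(Q_{R(T)})=A(R(T))^\perp$ is $A$-orthogonal to $R(T)$, one gets for $f,g\in\h$ that $\langle Tf,g\rangle_A=\langle Tf,Q_{R(T)}g\rangle_A=\langle Tf,TSg\rangle_A=\langle f,T^*ATSg\rangle=\langle f,Sg\rangle_A$, so $S=T^\sharp$ and $T$ is adjointable.

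None of these steps is deep; the work is entirely in bookkeeping which inner product (and which of $\h$, $\l$) each pairing refers to. The three points that are not purely formal, and where I would be careful, are: the $\h$-boundedness of $\TT^*|_\h$ via the closed graph theorem in $(2)\Rightarrow(1)$; the elementary remark $R(A)\subset R(T^*A)$ that upgrades the inclusion to the equality of $(4)$ and makes conditions $(4)$ and $(5)$ match the others; and the use of the open mapping theorem in $(3)\Rightarrow(1)$, which is justified precisely because a compatible subspace is, by definition, closed.
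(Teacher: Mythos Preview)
Your proof is correct and follows essentially the same route as the paper: the same construction $S=T^{-1}Q_{R(T)}$ for $(3)\Rightarrow(1)$, the same use of Douglas' theorem for the link with $(4)$ and $(5)$, and the same observation $R(A)=R(T^*AT)\subset R(T^*A)$ to upgrade the inclusion to equality. One small slip: your ``bridge'' formula $\langle f,g\rangle_A=\langle\TT f,g\rangle_\l$ has a stray $\TT$ (it should read $\langle f,g\rangle_\l$), though your subsequent computations use the correct identity; and you are in fact slightly more careful than the paper in invoking the closed graph theorem for the $\h$-boundedness of $\TT^*|_\h$ in $(2)\Rightarrow(1)$, a point the paper leaves implicit.
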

\begin{proof}
The equivalence between the first two conditions is clear: if $T$ has an $A$-adjoint $S$, then $\TT^*$ and $S$ coincide on $\h$.    Then $\TT^*(\h)\subset \h$ and $S=\TT^*|_\h$.  If $\TT^*(\h)\subset \h$, then $\TT^*|_\h$ is the $A$-adjoint of $T$.  Note that therefore the final projection $\PP=\TT\TT^*$ of $\TT$ leaves $\h$ invariant: $\PP(\h)\subset\h$. Thus, $\PP$ induces an ($A$-symmetric) idempotent $P=\PP|_\h$ in $\h$. Clearly, $P=TS$, and  $R(P)=R(T)$: $P=TS$ implies that $R(P)\subset R(T)$; on the other hand, $\TT\TT^*\TT=\TT$, so that, restricting to $\h$, $TST=PT=T$ ans therefore $R(T)\subset R(P)$. Summarizing, $R(T)$ is the range of an $A$-symmetric projection, i.e., a compatible subspace.

Conversely, suppose that $R(T)$ is compatible and denote by $P$ the unique $A$-symmetric idempotent
with $R(P)=R(T)$. In particular, $R(T)$ is closed, and $T$ has a pseudo-inverse $S$ such that $TS=P$.
Indeed, $T|_{N(T)^\perp}\to R(T)$ is an isomorphism between Banach spaces,
thus it has a bounded inverse $T'$. Put $S$
$$
S=\left\{ \begin{array}{l} T' \hbox{ in } R(P)=R(T) \\ 0 \hbox{ in } N(P) \end{array} \right. .
$$
Then $TS$ equals the identity in $R(P)$ and is zero in $N(P)$, i.e., $TS=P$. Note that $PT=T$, and that $P^*A=AP$. Then $T^*AT=A$ implies that
$$
AS=T^*ATS=T^*AP=T^*P^*A=(PT)^*A=T^*A,
$$
i.e., $S$ is the $A$-adjoint of $T$.

The equivalence with the last two conditions follows using Douglas' result: $T$ is $A$-adjointable if and only if there exists a solution $X$ to the operator equation $AX=T^*A$ (i.e., $B=T^*A$), which occurs if and only if $R(T^*A)\subset R(A)$, or equivalently, there exists $\lambda>0$ such that 
$$
BB^*=T^*A^2T\le \lambda A^2.
$$
The former condition $R(T^*A)\subset R(A)$ is in turn equivalent to
$$
R(A)=R(T^*AT)\subset R(T^*A)\subset R(A),
$$
i.e., $R(A)=R(T^*A)$.
\end{proof}
\begin{rem}
Note that if $T\in\ii_A^a$, then $T$ is injective and has closed range, i.e. $T$ is bounded from below. Note the following example, of an operator in $\ii_A$, with closed range, but whose range is not a compatible subspace of $\h$.
\end{rem}
\begin{ejem}
Let $\h=H_0^1(0,1)$, the subspace of the Sobolev space $H^1(0,1)$ obtained as the closure of the smooth functions in $(0,1)$  with compact support. Let $\l=L^2(0,1)$. With the same argument as in  \cite{compatible grassmannian} (Example 3.7), it can be shown that 
$$
\s=\{f\in\h_0^1(0,1): f\equiv 0 \hbox{ in } [1/2,1)\}
$$ is a (closed) non compatible subspace of $\h\subset\l$. Let us sketch how this is proved. Let $f_0$ be a $C^\infty$ function in $(0,1)$, of compact support, which equals $1$ on an interval centered a $t=\frac12$. Then $f_0=f_0\ \chi_{(0,1/2)}+f_0\ \chi_{[1/2,1)}$ is an orthogonal sum in $\l$, with $f_0\ \chi_{(0,1/2)}\in\overline{\s}$ (the closure of $\s$ in $\l$). Thus $\PP_{\overline{\s}}(f_0)=f_0\ \chi_{(0,1/2)}$, which does no belong to $\h$, i.e., $\PP_{\overline{\s}}(\h)\not\subset \h$, and $\s$ is not compatible.
Consider the operator $\VV$ in $\b(\l)$ given by
$$
\VV f(t)=\left\{\begin{array}{ll} \sqrt{2} \ f(2t) & \hbox{ if } t\in(0,1/2) \\ 0 & \hbox{ if } t\in [1/2,1). \end{array}\right.
$$
Note that
$$
\|\VV f\|_2=\int_0^{1/2} |\VV f(t)|^2 dt\stackrel{2t=s}{=} \int_0(\sqrt{2})^2|f(s)|^2\frac12 ds=\|f\|^2,
$$
i.e. $\VV$ is an isometry of $\l$. Clearly $\VV$ preserves smooth funcions of compact support, $\VV(\h)\subset\h$. Also it is clear that $\VV(\h)$ consists of all functions in $H^1(0,1)$ with compact support contained in $(0,1/2)$,  i.e., $\VV(\h)=\s$. Thus,  $V=\VV|_\h$ belongs to $\ii_A$, has closed range, but is not adjointable.
\end{ejem}

\begin{ejems}

\noindent

\begin{enumerate}
\item
Example 2.4.1, where $T$ is the shift operator and  $\h\subset\l$ are, respectively, the Dirichlet and the Hardy space of the circle, can be generalized. 
Let $T\in\ii_A$ such that $R(T)$ is closed and has finite co-dimension. Then $T\in\ii_A^a$. Indeed, in \cite{compatible grassmannian} it was shown that closed subspaces with finite co-dimension are compatible.
\item
There are also examples of $T\in\ii_A^a$ where $R(T)$ has infinite co-dimension. For instance, consider the shift (of infinite multiplicity) $\TT$ in $\l=\ell^2$, $\TT e_n=e_{2n}$. Consider as before, $\h=\{(x_n)\in\ell^2: \sum_{n=1}^\infty n|x_n|^2<\infty\}$. Clearly $\TT(\h)\subset\h$. Note that  $(\TT^*x)_n=x_{2n}$, and then
$$
\langle \TT^*x,\TT^*x\rangle_\h=\sum_{n=1}^\infty n |x_{2n}|^2<\sum_{n=1}^\infty 2n |x_{2n}|^2\le \langle x,x\rangle_\h,
$$
i.e., $\TT^*(\h)\subset\h$. Then $T=\TT|_\h\in\ii_A^a$.
\end{enumerate}
\end{ejems}
\begin{rem}
Again, using Douglas' result, it holds that $T\in\ii_A$ is always $A^{1/2}$-adjointable:
$$
T^*(A^{1/2})^2T=T^*AT=A=(A^{1/2})^2,
$$
which is the second condition of Douglas for $\lambda=1$.
\end{rem}

Note the evident facts that if $T\in\ii_A^a$, then $T^n\in\ii_A^a$ for $n\ge 1$; also, $T^\sharp T=1$ in $\h$.

\section{Wold decomposition of $A$-adjointable isometries}

Recall the Wold decomposition of an isometry (see for instance \cite{nagyfoias}):
given an isometry $\VV$ acting in $\l$, there exists an orthogonal decomposition 
$$\l=\l_0\oplus\l_1$$
such that $\l_0,\l_1$ reduce $\VV$, $\VV|_{\l_0}$ is a unitary operator and $\VV|_{\l_1}$ is a shift
($\l_1=\bigoplus\limits_{n=0}^\infty \VV^n\l_w$,
where $\l_w=\l\ominus\VV\l$ is the so called {\it wandering space} of $\VV$).
This decomposition is unique, $\l_1$ is determined by $\VV$,
as seen in the above formula; also $\l_0=\bigcap_{n=0}^\infty \VV^n\l$.

The next result shows that the Wold decomposition of $\VV$ in $\l$ induces an analogous compatible decomposition for $V$ in $\h$.

\begin{teo}
Let $V\in\ii_A^a$ and $\VV$ its extension to an isometry of $\l$, then the Wold decomposition $\l=\l_0\oplus\l_1$ of $\VV$ induces a (direct sum, $A$-orthogonal) decomposition
$$
\h=\h_0\oplus\h_1, \ \h_0=\l_0\cap\h, \ \h_1=\h\cap\l_1,
$$
 of compatible subspaces. $V\h_0=\h_0$, $V\h_1\subset\h_1$, so that $V|_{\h_0}$ is invertible  in $\h_0$. 
\end{teo}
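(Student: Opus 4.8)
The plan is to transport the Wold decomposition from $\l$ down to $\h$ by showing that the orthogonal projections $\PP_0=\PP_{\l_0}$ and $\PP_1=\PP_{\l_1}=1-\PP_0$ of $\l$ leave the dense subspace $\h$ invariant; the rest of the statement will then follow almost formally. First I would record the properties of $V\in\ii_A^a$ that get used: by Theorem~\ref{proposicion 22}, $\VV^*(\h)\subseteq\h$, so $V^\sharp:=\VV^*|_\h$ is well defined, $V^\sharp V=1$, and likewise $V^n\in\ii_A^a$ with $(V^n)^\sharp=(V^\sharp)^n$, so each $V^n\h=R(V^n)$ is a compatible subspace. I would also use the standard Wold formulas: $\PP_0$ is the strong limit of the decreasing sequence of projections $\VV^n\VV^{*n}$, and $\PP_1=\sum_{n\ge0}\VV^n(1-\VV\VV^*)\VV^{*n}$.

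\smallskip

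\noindent Granting $\PP_0(\h)\subseteq\h$ for the moment, the statement assembles as follows. Put $P_0:=\PP_0|_\h$ and $P_1:=1-P_0$: restricting a selfadjoint idempotent gives idempotents on $\h$ that are $A$-symmetric, since $\langle P_if,g\rangle_A=\langle\PP_if,g\rangle_\l=\langle f,\PP_ig\rangle_\l=\langle f,P_ig\rangle_A$, with $R(P_i)=\l_i\cap\h=:\h_i$ (if $f\in\l_i\cap\h$ then $\PP_if=f=P_if$). Since $A$ is a contraction, $\|\cdot\|_\l\le\|\cdot\|$, so each $\h_i=\l_i\cap\h$ is closed in $\h$ (and $\h_0=\bigcap_nV^n\h$ is closed anyway, the $V^n\h$ being closed by the Remark after Theorem~\ref{proposicion 22}); the associated projection $P_0$ is then bounded by the closed graph theorem. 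Hence $\h=\h_0\oplus\h_1$ is a topological direct sum of the compatible subspaces $\h_0=R(P_0)$ and $\h_1=R(1-P_0)$; it is $A$-orthogonal because $\l_0\perp_\l\l_1$, and direct because $\langle\ ,\ \rangle_A$ is positive definite. For the invariance properties I would identify $\h_0$ first: for $f\in\h$ one has $f\in\VV^n\l$ iff $f=\VV^n(\VV^{*n}f)=V^n\big((V^\sharp)^nf\big)$ with $(V^\sharp)^nf\in\h$, i.e.\ iff $f\in V^n\h$, so $\h_0=\l_0\cap\h=\bigcap_nV^n\h$. Then $V\h_0\subseteq\h_0$ as $V(V^n\h)\subseteq V^n\h$; conversely, if $f\in\h_0$ write $f=Vg$ ($f\in V\h$), and injectivity of $V$ together with $Vg\in V^{n+1}\h$ forces $g\in V^n\h$ for every $n$, so $g\in\h_0$ and $V\h_0=\h_0$. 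Since $\l_0$ reduces $\VV$ with $\VV|_{\l_0}$ unitary, $V^\sharp(\h_0)\subseteq\l_0\cap\h=\h_0$, so $V^\sharp|_{\h_0}$ is a bounded two-sided inverse of $V|_{\h_0}$, which is therefore invertible in $\h_0$. Finally $\l_1$ reduces $\VV$, so for $f\in\h_1$, $Vf=\VV f\in\l_1\cap\h=\h_1$.

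\smallskip

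\noindent The crux is thus $\PP_0(\h)\subseteq\h$. My approach would be via the coherent decreasing family of $A$-symmetric idempotents $Q_n:=V^n(V^\sharp)^n=\VV^n\VV^{*n}|_\h$: from $(V^\sharp)^kV^k=1$ one gets $Q_mQ_n=Q_{\max(m,n)}$, the ranges are the compatible subspaces $V^n\h$, and $f=\sum_{k<n}R_kf+Q_nf$ where $R_k:=Q_k-Q_{k+1}=V^k(1-VV^\sharp)(V^\sharp)^k$ are mutually $A$-orthogonal idempotents. On $\l$ we already know $Q_nf\to\PP_0f$ in $\|\cdot\|_\l$, so it suffices to show $\{Q_nf\}_n$ is bounded in the $\h$-norm: then a subsequence converges weakly in the Hilbert space $\h$, hence weakly in $\l$ (the inclusion being bounded), and comparison with the $\l$-limit identifies the weak limit with $\PP_0f\in\h$.

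\smallskip

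\noindent The boundedness of $\{Q_nf\}$ in $\h$ --- equivalently, by the uniform boundedness principle, $\sup_n\|Q_n\|_{\b(\h)}<\infty$ --- is where I expect the real work to lie, and it is precisely the step that has to use $A$-adjointability beyond the free $\l$-convergence furnished by the Wold decomposition: one must control, uniformly in $n$, the $\h$-norm of the $\langle\ ,\ \rangle_A$-orthogonal idempotent onto $V^n\h$, i.e.\ the opening in the $\h$-geometry between $V^n\h$ and its $A$-orthogonal supplement $\h\cap(\VV^n\l)^\perp=\bigoplus_{k<n}V^k\h_w$, where $\h_w=\h\cap\l_w=R(1-VV^\sharp)$ is the $\h$-level wandering subspace. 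I anticipate this uniform estimate --- presumably through the wandering-subspace structure generated by the idempotent $1-VV^\sharp$ together with $V^\sharp V=1$ --- to be the only nonroutine ingredient; once it is in hand, the reduction above closes the proof.
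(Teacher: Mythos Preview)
Your reduction is sound: once $\PP_0(\h)\subset\h$ is established, the direct-sum decomposition, the compatibility of $\h_0,\h_1$, and the invariance and invertibility claims all follow along the lines you sketch (the closed-graph argument for boundedness of $P_0$, the identification $\h_0=\bigcap_n V^n\h$, and $V^\sharp|_{\h_0}$ as a two-sided inverse are all correct). But the step you yourself isolate---$\sup_n\|Q_n\|_{\b(\h)}<\infty$, with $Q_n=V^n(V^\sharp)^n$---is not proved, and it is not a routine consequence of $V\in\ii_A^a$. Nothing in the hypotheses bounds $\|V^n\|\cdot\|(V^\sharp)^n\|$ uniformly in $n$, and an $A$-symmetric idempotent can have arbitrarily large $\b(\h)$-norm; so the ``only nonroutine ingredient'' you anticipate is in fact the entire content of the compatibility assertion, and your proposal leaves it open.

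The paper's argument is organized differently: it works from the wandering side, noting that $\l_w\cap\h=R(V)^{\perp_A}$ is compatible, that each $V^n(R(V)^{\perp_A})$ is compatible (since $V^n\in\ii_A^a$ carries compatible subspaces to compatible subspaces via $Q\mapsto V^nQ(V^\sharp)^n$), and then passes to the assertion that the full $A$-orthogonal sum $\bigoplus_{n\ge0}V^n(R(V)^{\perp_A})$ is compatible; the invertibility of $V|_{\h_0}$ is handled by a short separate computation using $V^\sharp V=1$ and $\h_0\subset R(V)$, essentially as you do. Note, however, that the $A$-orthogonal idempotent onto the partial sum $\bigoplus_{k<n}V^k(R(V)^{\perp_A})$ is exactly $1-Q_n$, so the paper's passage from ``each summand compatible'' to ``infinite sum compatible'' is equivalent to your uniform bound; the paper records this as an immediate ``Therefore'' without a supporting estimate. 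In that sense your route and the paper's run into the same obstacle---you have simply been more explicit about naming it.
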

\begin{proof}
The wandering subspace $\l_w=\l\ominus\VV\l$ is the orthogonal complement of the range of $\VV$, therefore its intersection with $\h$ is the $A$-orthogonal complement $R(V)^{\perp_A}$ of $R(V)$ (which are both compatible subspaces of $\h$). Note that for
$n\ge 1$, 
$$
V^n(R(V)^{\perp_A})=\VV^n(R(V)^{\perp_A}) \hbox{ is dense in } \VV^n\l_w.
$$
Thus the subspaces $V^n(R(V)^{\perp_A})$ are in direct sum for different $n$, and their sum
$$
\bigoplus_{n=0}^\infty V^n(R(V)^{\perp_A}) \hbox{ is dense in } \bigoplus_{n=0}^\infty \VV^n\l_w.
$$
Note also that since $V^{n}\in\ii_A^a$,  $V^n(R(V)^{\perp_A})$ is a compatible subspace (in particular, closed). Therefore the $A$-orthogonal sum (and direct sum in $\h$) $\bigoplus_{n=0}^\infty V^n(R(V)^{\perp_A})$ is a compatible subspace.

Next, $\VV\l_0=\l_0$, and thus $V(\h_0)\subset\VV\l_0=\l_0$, together with $V(\h_0)\subset \h$,  implies $V(\h_0)\subset\h\cap\l_0=\h_0$. By the same reason, since $V^\sharp=\VV^*|_\h$. Also $V^\sharp(\h_0)\subset\h_0$. Moreover,
$$
\h_0=V^\sharp V(\h_0)\subset V^\sharp(\h_0)\subset\h_0.
$$
Since clearly $\h_0\subset R(V)$, then $VV^\sharp=P_V$ (the $A$-symmetric idempotent in $\h$ with range equal to $R(V)$) satisfies $VV^\sharp\h_0=\h_0$. Then
$$
\h_0=VV^\sharp(\h_0)\subset V(\h_0)\subset\h_0.
$$
That is, $V(\h_0)=\h_0$, i.e. $V|_{\h_0}$ is invertible.
\end{proof}
\begin{rem}
In the above theorem it was shown that $V|_{\h_0}$ is invertible in $\h_0$. The operator $A$ does not necessarily leave $\h_0$ invariant, nevertheless $A$ induces an inner product in $\h_0$. For this induced inner product (which is implemented by the compression of $A$ to $\h_0$: if $f,g\in\h_0$, $\langle f,g\rangle_A=\langle Af,g\rangle=\langle AP_{\h_0}f,P_{\h_0}g\rangle=\langle P_{\h_0}AP_{\h_0}f,g\rangle$), the restriction $V|_{\h_0}$ is a isometric and onto, i.e. $V|_{\h_0}$ belongs to the group $\u_{P_{\h_0}AP_{\h_0}}$ of the Hilbert space $\h_0$.
\end{rem}

\section{Regular structure of $\ii_A^a$}

In the introduction it was observed that $\u_A$ acts on  $\ii_A^a$: if $G\in\u_A$ and $T\in\ii_A^a$, then $GT\in\ii_A^a$, being a composition of $A$-adjointable isometries.

Let us recall in the next remark, certain facts on $A$-orthogonal projections (see \cite{compatible grassmannian})

\begin{rem}
Let 
$$
\p_A=\{P\in\b(\h):  P^2=P, P \hbox{ is } A-\hbox{symmetrizable}\}.
$$
Recall that $\b_A(\h)$ denotes the algebra of operators acting in $\h$ that are  $A$-adjointable. It is an involutive Banach algebra with the norm $|T|=\max\{\|T\|, \|T^\sharp\|\}$. Clearly $|T^\sharp|=|T|$. The set $\p_A$ is a complemented $C^\infty$-submanifold of $\b_A(\h)$, and a homogeneous space of $\u_A$ (which in turn is a Banach-Lie group and a $C^\infty$ submanifold of $\b_A(\h)$). This means that for each $P_0\in\p_A$, the map 
$\pi_{P_0}:\u_A\to\p_A$, $\pi_{P_0}(G)=GP_0G^{-1}$, is a submersion: it is surjective onto the orbit of $P_0$ by the action, which is a  union of connected components. In particular, it has smooth local cross sections. That is, there exists a radius $r_{P_0}>0$  and a map $\sigma_{P_0}$,
$$
\sigma_{P_0}:\{P\in\o_A: |P-P_0|<r_{P_0}\}\subset\p_A\to\u_A,
$$
with the following properties:
\begin{itemize}
\item
$\sigma_{P_0}$  is a $C^\infty$ (local) cross section of $\pi_{P_0}$: 
$$
\pi_{P_0}(\sigma_{P_0}(P))=\sigma_{P_0}(P) P_0  \sigma_{P_0}(P)^{-1}=P, \hbox{ for }  P\in\p_A \hbox{ with } |P-P_0|<r_{P_0};
$$
\item
the map $\sigma_{P_0}$ extends to an open ball {\it of $\b_a(\h)$} centered at $P_0$, as a $C^\infty$ map with values in the invertible group of $\b_a(\h)$;
\item
if one moves $P$ and $P_0$, locally, the element $\sigma_{P_0}(P)$ is $C^\infty$ in both variables.
\end{itemize}
\end{rem}

The first two conditions, in fact, imply that $\pi_{P_0}$ is a submersion and $\p_A$ a complemented submanifold:
\begin{lem}\label{lema auxiliar}
Let $\a$ be a Banach algebra, $\u$ be a Banach-Lie subgroup of the invertible group $\a^\times$ of $\a$, and $X$ a Banach space on which $\u$ acts smoothly. Let $x_0\in X$ be a fixed element. Suppose that the map
$$
\pi_{x_0}:\u\to \o_{x_0}:=\{u\cdot x_0: u\in\u\}
$$
has a continuous cross section $\sigma_{x_0}$ defined on an neighbourhood of $x_0$ in $\o_{x_0}$ in the relative topology induced by $X$. If $\sigma_{x_0}$ can be extended to a smooth map in an open neighbourhood of $x_0$ in $X$, then $\o_{x_0}$ is a complemented smooth submanifold of $X$ and the map $\pi_{x_0}$ is a submersion.
 \end{lem}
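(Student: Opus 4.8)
The plan is to run the standard "local chart from a cross section" argument for homogeneous spaces of Banach-Lie groups, adapted to the present abstract setting. Write $\mathfrak{g} = \operatorname{Lie}(\u)\subset\a$ for the Banach-Lie algebra of $\u$, and consider the orbit map $\pi_{x_0}\colon\u\to\o_{x_0}$, $\pi_{x_0}(u)=u\cdot x_0$. The isotropy subgroup $\u_{x_0}=\{u\in\u: u\cdot x_0=x_0\}$ is a closed subgroup; the hypothesis that $\sigma_{x_0}$ extends to a smooth $X$-valued map on a neighbourhood of $x_0$ will be used to show $\u_{x_0}$ is in fact a Banach-Lie subgroup with complemented Lie algebra, so that the quotient is well behaved — but the cleaner route, which I would follow, is to build the manifold chart on $\o_{x_0}$ directly and read off everything from there.

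First I would define, on the neighbourhood $\v := \{x\in\o_{x_0}: x\text{ near }x_0\}$ where $\sigma_{x_0}$ is defined, the composite map $\tau := \sigma_{x_0}$ viewed as a map into $\a^\times$; by hypothesis this is the restriction of a smooth map $\tilde\tau$ defined on an open neighbourhood $W$ of $x_0$ in the ambient Banach space $X$, with $\tilde\tau(x_0)\in\u$ and $\tilde\tau(x)\cdot x_0 = x$ for $x\in\v$. The key observation is that the last identity $\tilde\tau(x)\cdot x_0 = x$ holds for all $x$ in $\v=\o_{x_0}\cap W$, but the map $x\mapsto\tilde\tau(x)\cdot x_0$ is defined and smooth on all of $W$ (composition of the smooth action $\a^\times\times X\to X$ with $\tilde\tau$). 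Call this smooth map $\rho\colon W\to X$. Then $\rho$ is a smooth retraction-type map: $\rho|_{\v} = \mathrm{id}_{\v}$, $\rho(x_0)=x_0$. Hence $E := D\rho_{x_0}\colon X\to X$ is a bounded idempotent ($E^2=E$ follows from $\rho\circ\rho = \rho$ near $x_0$, which holds because $\rho(x)\in\v$ for $x$ near $x_0$ and $\rho$ is the identity there), and its range is exactly the "tangent space" one expects. Standard submanifold-via-idempotent reasoning — essentially the implicit/inverse function theorem in Banach spaces applied to $\rho$ together with a complement of $R(E)$ — then shows that $\v$, and hence $\o_{x_0}$, is a complemented $C^\infty$ submanifold of $X$ with tangent space $R(E)$ at $x_0$; translating by the group action moves this chart around the whole orbit.

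Next I would check that $\pi_{x_0}$ is a submersion. Smoothness of $\pi_{x_0}$ is immediate from smoothness of the action. For the submersion property it suffices, by homogeneity (left translation in $\u$ is a diffeomorphism and intertwines the orbit maps appropriately), to check it at the identity $e\in\u$: I must produce a smooth local section of $\pi_{x_0}$ through $e$, i.e. show $\pi_{x_0}$ has a smooth right inverse near $x_0$ sending $x_0\mapsto e$. But $\sigma_{x_0}$ is exactly such a section up to multiplying by a fixed element of the isotropy group: replacing $\sigma_{x_0}(x)$ by $\sigma_{x_0}(x)\,\sigma_{x_0}(x_0)^{-1}$ (which is still smooth, still lands in $\u$ since $\sigma_{x_0}(x_0)\in\u_{x_0}\subset\u$, and now sends $x_0\mapsto e$) gives a smooth local cross section $s$ with $s(x_0)=e$ and $\pi_{x_0}\circ s = \mathrm{id}$. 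Differentiating the identity $\pi_{x_0}\circ s = \mathrm{id}_{\v}$ at $x_0$ shows $D(\pi_{x_0})_e$ is surjective onto $T_{x_0}\o_{x_0}=R(E)$ with a bounded linear right inverse $Ds_{x_0}$; hence $D(\pi_{x_0})_e$ is a split surjection, its kernel is complemented, and $\pi_{x_0}$ is a submersion at $e$, therefore everywhere.

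The main obstacle, and the point that needs care, is the passage from "continuous cross section that happens to extend smoothly into the ambient space $X$" to "$\v$ is a genuine smooth submanifold". One must verify that $E=D\rho_{x_0}$ really is idempotent — this rests on $\rho\circ\rho=\rho$ holding on a neighbourhood of $x_0$, which in turn needs $\rho$ to map a small enough neighbourhood of $x_0$ in $X$ back into the domain $\v$ of the section, using continuity of $\rho$ and that $\rho(x_0)=x_0\in\v$ (an open subset of $\o_{x_0}$) — and that the chart obtained from the splitting $X=R(E)\oplus N(E)$ actually has $\o_{x_0}$ (locally) as its "$R(E)$-slice", which is where one invokes the inverse function theorem on the map $W\to R(E)\times N(E)$, $x\mapsto(\rho(x),\, (x-\rho(x)))$ near $x_0$. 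I would spell this out; everything else (smoothness of compositions, the translation argument, differentiating the section identity) is routine.
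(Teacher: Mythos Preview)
The paper's own proof is a two-line sketch: it asserts that the hypotheses make $\pi_{x_0}$ open (via the continuous section) and make $d\pi_{x_0}|_e$ split as a map $\mathfrak g\to X$ (via the smooth extension), and stops there. Your proposal is far more detailed and follows a reasonable route---building a smooth retraction $\rho(x)=\tilde\tau(x)\cdot x_0$ and using $E=D\rho_{x_0}$ as the splitting idempotent---but there is a genuine gap exactly where you flag the ``main obstacle''. You argue $E^2=E$ because $\rho\circ\rho=\rho$ near $x_0$, and you justify this by claiming $\rho$ maps a small $X$-neighbourhood of $x_0$ into $\v$. But $\v$ is open only in the orbit $\o_{x_0}$, not in $X$: continuity of $\rho$ gives $\rho(x)$ close to $x_0$ \emph{in $X$}, not $\rho(x)\in\o_{x_0}$. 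For the latter you would need $\tilde\tau(x)\in\u$, whereas the extension $\tilde\tau$ only lands in $\a^\times$---and in the paper's concrete constructions (see formula (\ref{sigma}) and the Remark preceding the lemma) it genuinely leaves $\u$ once $x$ leaves the orbit. So $\rho\circ\rho=\rho$ is not available on an open subset of $X$, and the idempotency of $E$ does not follow as written. (You also tacitly assume the $\u$-action extends to an $\a^\times$-action on $X$; this holds in the paper's applications but is not among the lemma's hypotheses.)

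The repair is to produce the splitting on the group side. Set $G:=d\pi_{x_0}|_e:\mathfrak g\to X$ and $F:=d\tilde s|_{x_0}:X\to\a$ (with $s$ the normalized section). Differentiating $\pi_{x_0}\circ s=\mathrm{id}$ along curves $t\mapsto\exp(t\xi)\cdot x_0$ (which lie in $\v$, so $s$ applies and stays in $\u$) gives $F(G\xi)\in\mathfrak g$ and $GFG\xi=G\xi$ for every $\xi\in\mathfrak g$. Thus $P:=FG:\mathfrak g\to\mathfrak g$ is a bounded idempotent with $\ker P=\ker G$, so $\ker d\pi_{x_0}|_e$ is complemented. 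For the image, use that $\u$ is a Banach-Lie subgroup of $\a^\times$, so $\mathfrak g$ is complemented in $\a$; composing $F$ with a bounded projection $\a\to\mathfrak g$ and then with $G$ yields a bounded projection of $X$ onto $\mathrm{Im}\,G$. This is precisely the splitting the paper invokes without proof, and from it the submanifold and submersion conclusions follow via the inverse function theorem along the lines of your last paragraph.
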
  
\begin{proof}
The hypothesis imply that $\pi_{x_0}$ is open as a map $\u\to\o_{x_0}$, and is a smooth map that splits,  regarded as a map $\u\to X$. 
\end{proof}
Let us fix $T_0\in\ii_A^a$, and construct the extendable local cross sections of 
\begin{equation}\label{mapa pi}
\pi_{T_0}:\u_A\to \ii_A^a, \ \pi_{T_0}(G)=GT_0.
\end{equation} 
For $T\in\ii_A^a$,  denote by $P_T:=TT^\sharp$. Clearly $P_T$ is an $A$-symmetric projection, which extends to the orthogonal projection $\PP_{R(\TT)}$ onto the range of $\TT$ in $\l$. 

If $T$ is close to $T_0$, then $P_T$ is close in $P_{T_0}$. Explicitly,
$$
|P_T-P_{T_0}|=|TT^\sharp-T_0T_0^\sharp|\le |TT^\sharp-TT_0^\sharp|+|TT_0^\sharp-T_0T_0^\sharp|\le |T||T^\sharp-T_0^\sharp|+|T-T_0||T_0^\sharp|
$$
$$
=|T-T_0|(|T|+|T_0|)\le |T-T_0|(|T-T_0|+2|T_0|),
$$
In particular, there exists $\delta_{T_0}$, which depends on $T_0$, such that if $|T-T_0|<\delta_{T_0}$, then $|P_T-P_{T_0}|<r_{P_{T_0}}$ (the radius given in the above remark).

Suppose that $|T-T_0|<\delta_{T_0}$, and denote by $G_T=\sigma_{P_{T_0}}(P_T)\in\u_A$. Clearly $G_T$ is a $C^\infty$ map of $T$ which satisfies that $G_TP_{T_0}G_{T_0}^{-1}=P_T$. Then $T':=G_T^{-1} T$ is an element of $\ii_A^a$ which has final projection 
$$
T'T'^\sharp=G_T^{-1}TT^\sharp (G^{-1})^\sharp=G_T^{-1}P_TG_T=P_{T_0}.
$$

Two elements $T',T_0$ of $\ii_A^a$
with the same final projection $P_{T_0}$ are conjugate by the action of $\u_A$:
there exists $H=H_{T_0}(T')\in\u_A$ such that $HT_0=T'$. Pick, for instance
$$
H=T'T_0^\sharp +(1-P_{T_0}).
$$
Note that
$$\begin{array}{ccl}
HH^\sharp &=& T'T_0^\sharp T_0T'^\sharp+T'T_0^\sharp(1-P_{T_0})+(1-P_{T_0})T_0T'^\sharp+(1-P_{T_0}) \\
&=& T'T'^\sharp+(1-P_{T_0})=P_{T_0}+1-P_{T_0} \\
&=&1,
\end{array}$$
because $(1-P_{T_0})T'=T_0^\sharp(1-P_{T_0})=0$. Similarly $H^\sharp H=1$. Moreover,
$$
HT_0=T'T_0^\sharp T_0+(1-P_{T_0})T_0=T'.
$$
Clearly $H=H_{T_0}(T')$ is a $C^\infty$ map in terms of $T'$ and $T_0$, and thus a $C^\infty$ map in terms of $T$.

Thus, $H\in\u_A$ and $HT_0=G_T^{-1}T$, i.e.,
$T=G_THT_0$. Define
\begin{equation}\label{sigma}
\sigma_{T_0}(T)=G_TH=G_T(G_T^{-1}TT_0^\sharp+(1-P_{T_0})=TT_0^\sharp+G_T(1-P_{T_0}) , \ \hbox{ for } T\in\ii_A^a, \ |T-T_0|<\delta_{T_0}.
\end{equation}

\begin{teo}\label{estructura diferenciable}
Let $T_0\in\ii_A^a$. Then the orbit 
$$
\o_{T_0}:=\{GT_0: G\in\u_A\}
$$ is a $C^\infty$ complemented submanifold of $\b_A(\h)$, and the map
$$
\pi_{T_0}:\u_A\to \o_{T_0}, \ \pi_{T_0}(G)=GT_0
$$
is  a $C^\infty$ submersion. The orbit $\o_{T_0}$ is a  union of connected components of $\ii_A^a$, which implies that $\ii_A^a$ is a $C^\infty$ complemented submanifold of $\b_A(\h)$
\end{teo}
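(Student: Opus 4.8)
The plan is to deduce everything from Lemma \ref{lema auxiliar} by exhibiting, for each fixed $T_0\in\ii_A^a$, an extendable $C^\infty$ local cross section of the map $\pi_{T_0}$ of (\ref{mapa pi}). Concretely, I would apply Lemma \ref{lema auxiliar} with $\a=\b_A(\h)$, $\u=\u_A$ (a Banach--Lie subgroup of the invertible group of $\b_A(\h)$ by \cite{grupo dos normas}), $X=\b_A(\h)$, and the action of $\u_A$ on $\b_A(\h)$ by left multiplication $G\cdot T=GT$, which is smooth since $\b_A(\h)$ is a Banach algebra under $|\cdot|$, so multiplication is a bounded bilinear map. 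For this action the orbit of $T_0$ is exactly $\o_{T_0}$ and the orbit map is $\pi_{T_0}$. Thus the whole argument reduces to checking that the map $\sigma_{T_0}$ built in (\ref{sigma}) is a continuous cross section of $\pi_{T_0}$ on a relatively open neighbourhood of $T_0$ in $\o_{T_0}$, and that it extends to a $C^\infty$ map on an open $\b_A(\h)$-neighbourhood of $T_0$.

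For the cross-section property I would just invoke the construction preceding the theorem: for $|T-T_0|<\delta_{T_0}$ one has $\sigma_{T_0}(T)\in\u_A$ and $\sigma_{T_0}(T)T_0=T$, and moreover every such $T$ equals $\sigma_{T_0}(T)\cdot T_0$, so $\{T\in\ii_A^a:|T-T_0|<\delta_{T_0}\}\subset\o_{T_0}$ and the domain of $\sigma_{T_0}$ is a relatively open neighbourhood of $T_0$ in $\o_{T_0}$. For the extendability — which I expect to be the crux — the point is the formula $\sigma_{T_0}(T)=TT_0^\sharp+\sigma_{P_{T_0}}(TT^\sharp)(1-P_{T_0})$: the first summand is linear in $T$, and for the second I would use that the $A$-adjoint involution $B\mapsto B^\sharp$ is an isometry of $(\b_A(\h),|\cdot|)$, hence a bounded linear and thus $C^\infty$ map, so that $T\mapsto TT^\sharp$ is $C^\infty$ from $\b_A(\h)$ to $\b_A(\h)$. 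Since $P_{T_0}=T_0T_0^\sharp$, a sufficiently small $\b_A(\h)$-ball around $T_0$ maps under $T\mapsto TT^\sharp$ into the ball of radius $r_{P_{T_0}}$ on which the Grassmannian cross section $\sigma_{P_{T_0}}$ (by the Remark preceding Lemma \ref{lema auxiliar}, taken from \cite{compatible grassmannian}) extends to a $C^\infty$ map into the invertibles of $\b_A(\h)$. Composing, $\tilde\sigma_{T_0}(T):=TT_0^\sharp+\sigma_{P_{T_0}}(TT^\sharp)(1-P_{T_0})$ is a $C^\infty$ extension on a $\b_A(\h)$-neighbourhood of $T_0$, and it restricts to $\sigma_{T_0}$ on $\ii_A^a$ near $T_0$ since there $TT^\sharp=P_T\in\p_A$. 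Lemma \ref{lema auxiliar} then yields at once that $\o_{T_0}$ is a complemented $C^\infty$ submanifold of $\b_A(\h)$ and that $\pi_{T_0}$ is a $C^\infty$ submersion.

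It remains to globalize. The argument above shows that every $T_1\in\o_{T_0}$ has a $\b_A(\h)$-ball neighbourhood (radius $\delta_{T_1}$) whose intersection with $\ii_A^a$ is contained in $\o_{T_1}=\o_{T_0}$, so $\o_{T_0}$ is open in $\ii_A^a$; since the $\u_A$-orbits partition $\ii_A^a$, each orbit is then also closed, hence a union of connected components, as asserted. Finally, $\ii_A^a$ being the disjoint union of these clopen orbits, each a complemented $C^\infty$ submanifold of $\b_A(\h)$, a submanifold chart for the orbit through a point is a submanifold chart for $\ii_A^a$ at that point, so $\ii_A^a$ is itself a complemented $C^\infty$ submanifold of $\b_A(\h)$. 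The one genuinely delicate ingredient is the smooth extendability of $\sigma_{T_0}$: it rests on the norm $|\cdot|$ being chosen so that the $A$-adjoint is an isometry (in the plain operator norm $\b_A(\h)$ need not be closed and $B\mapsto B^\sharp$ is discontinuous), together with the extendability of the Grassmannian cross section imported from \cite{compatible grassmannian}; everything else is formal, via Lemma \ref{lema auxiliar} and the fact that distinct group orbits are disjoint.
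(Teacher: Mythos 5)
Your proposal is correct and follows essentially the same route as the paper: both reduce the theorem to Lemma \ref{lema auxiliar} via the explicit local cross section $\sigma_{T_0}$ of (\ref{sigma}), with the smooth extendability resting on the continuity of $B\mapsto B^\sharp$ in the norm $|\cdot|$ of $\b_A(\h)$ and on the extendable Grassmannian section $\sigma_{P_{T_0}}$ imported from \cite{compatible grassmannian}. The only divergence is the final step: the paper argues that the orbit is a union of connected components by noting that $G_{T_0}=1$ forces $\sigma_{T_0}(T)$ into the identity component of $\u_A$ for $T$ near $T_0$ (so nearby points are joined to $T_0$ by paths in the orbit), whereas you use the equally valid, and arguably more self-contained, observation that the orbits are open and partition $\ii_A^a$, hence are clopen.
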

\begin{proof}
The fact that $\o_{T_0}$ is a submanifold follows using Lemma \ref{lema auxiliar}, noting that the cross section $\sigma_{T_0}$ defined in (\ref{sigma}) clearly extends to a $C^\infty$ map defined on a neighbourhood of $T_0$ in $\b_A(\h)$. Let us check fact that $\o_{T_0}$ is a union of connected components of $\ii_A^a$. In \cite{compatible grassmannian} it was observed that the local cross section $P_T\mapsto G_T$ for $\pi_{P_0}$ satisfies that  that $G_{T_0}=1$, and therefore $G_T$ belongs to the connected component of the identity in $\u_A$ if $T$ is close to $T_0$. This implies that if $T$ is close enough to $T_0$, then $\sigma_{T_0}(T)$ belongs to the connected component of the identity of $\u_A$. This observation implies that any $T$ close to $T_0$, can be connected to $T_0$ by means of a continuous path in the orbit.  
\end{proof}

\section{The extension method of M.G. Krein for \\ symmetrizable operators}

In this section  the extension method of M.G. Krein \cite{krein} is considered (see also Section 125 of the classic book \cite{riesznagy} for a detailed exposition). Krein shows that if $S:\l_0\subset\l\to \l$ is a contractive symmetric operator, then there exists a symmetric extension $S:\l\to\l$ which is also a contraction. Later on, other authors elaborated on this problem (see for instance \cite{davis et al} were the solutions are parametrized, or the works of Parrott  \cite{parrott}). In this section, following the ideas of M.G. Krein with slight modifications, it will be shown that an $A$-symmetric operator defined on a $A$-compatible subspace $\h_0\subset\h$, which is contractive for the norm induced by $\langle \ \ , \ \ \rangle_A$, can be extended to a contraction for this norm to the whole space $\h$. Or rather, and equivalently, we shall work in the bigger Hilbert space $\l$, extending to a symmetric contraction of $\l$ which 
leaves $\h$ invariant.

\begin{lem}\label{krein method}
Let $\XX$ be a selfadjoint operator in $\l$ such that $\XX(\h)\subset\h$, and let $\PP$ be a projection in $\l$ with range $\l_0$, such that $P=\PP|_\h$ is an idempotent in $\h$ with range $\h_0$. Suppose that $\|\XX\PP\|=1$. Then there exists a selfadjoint operator $\ZZ$ in $\l$ such that $\ZZ(\h)\subset\h$, $\ZZ\PP=\XX\PP$ and $\|\ZZ\|=1$.
\end{lem}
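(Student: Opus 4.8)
The plan is to turn the statement into a ``corner completion'' problem and then solve it by adapting Krein's construction so that it does not leave the dense subspace $\h$. First I would set $\l_1:=\l\ominus\l_0$ and $\h_1:=N(P)=\h\cap\l_1$; since $P=\PP|_\h$ is an $A$-symmetric idempotent with range $\h_0$, the subspace $\h_0$ is $A$-compatible, $\h=\h_0+\h_1$ is a direct, $A$-orthogonal decomposition, and $\h_0,\h_1$ are dense in $\l_0,\l_1$ respectively. Write $\XX$ in block form with respect to $\l=\l_0\oplus\l_1$,
$$
\XX=\begin{pmatrix}\XX_{00}&\XX_{01}\\ \XX_{10}&\XX_{11}\end{pmatrix},
$$
so that $\XX_{00}^*=\XX_{00}$, $\XX_{11}^*=\XX_{11}$, $\XX_{01}=\XX_{10}^*$, while $\XX(\h)\subset\h$ gives $\XX_{00}(\h_0)\subset\h_0$, $\XX_{10}(\h_0)\subset\h_1$, $\XX_{01}(\h_1)\subset\h_0$, $\XX_{11}(\h_1)\subset\h_1$. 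Since $\XX\PP$ vanishes on $\l_1$ and equals $\varphi\mapsto(\XX_{00}\varphi,\XX_{10}\varphi)$ on $\l_0$, the hypothesis $\|\XX\PP\|=1$ is the inequality $\XX_{00}^2+\XX_{10}^*\XX_{10}\le 1$ on $\l_0$. It therefore suffices to produce a selfadjoint $D\in\b(\l_1)$ with $D(\h_1)\subset\h_1$ and $\left\|\begin{pmatrix}\XX_{00}&\XX_{01}\\ \XX_{10}&D\end{pmatrix}\right\|\le 1$: then $\ZZ:=\begin{pmatrix}\XX_{00}&\XX_{01}\\ \XX_{10}&D\end{pmatrix}$ is selfadjoint, preserves $\h$, satisfies $\ZZ\PP=\XX\PP$, and has $\|\ZZ\|=1$ because $\|\ZZ\|\ge\|\ZZ\PP\|=\|\XX\PP\|=1$.

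Next I would recall the classical step. From $\XX_{00}^2+\XX_{10}^*\XX_{10}\le 1$ one gets $N(1-\XX_{00}^2)\subset N(\XX_{10})$ (if $(1-\XX_{00}^2)\varphi=0$ then $\|\XX_{10}\varphi\|^2\le\langle(1-\XX_{00}^2)\varphi,\varphi\rangle=0$), and Douglas' theorem applied in $\l$ yields a contraction $V:\l_0\to\l_1$ with $\XX_{10}=V(1-\XX_{00}^2)^{1/2}$. As
$$
U_0:=\begin{pmatrix}\XX_{00}&(1-\XX_{00}^2)^{1/2}\\ (1-\XX_{00}^2)^{1/2}&-\XX_{00}\end{pmatrix}
$$
is a selfadjoint unitary on $\l_0\oplus\l_0$, the operator $\begin{pmatrix}1&0\\0&V\end{pmatrix}U_0\begin{pmatrix}1&0\\0&V^*\end{pmatrix}=\begin{pmatrix}\XX_{00}&\XX_{01}\\ \XX_{10}&-V\XX_{00}V^*\end{pmatrix}$ is selfadjoint of norm $\le 1$; so $D_0:=-V\XX_{00}V^*$ (equivalently, the midpoint of the order interval of all admissible $D$'s) solves the corner-completion problem \emph{in} $\b(\l_1)$. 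The entire content of the lemma is to arrange $D$ so that it also maps $\h_1$ into $\h_1$.

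The adaptation — carried out ``following Krein with slight modifications'' — concerns exactly this $\h$-invariance, and that is where I expect the real work to lie. The factors $(1-\XX_{00}^2)^{1/2}$ and $V$ come from the functional calculus of the $A$-symmetric operator $\XX_{00}$, which does not respect the finer norm of $\h_0$; indeed $V$ alone need \emph{not} send $\h_0$ into $\h_1$, yet the assembled $D_0$ should. The starting observation is that, restricting $\XX_{00}^2+\XX_{10}^*\XX_{10}\le 1$ to $\h_0$ and using that the $\l$-inner product coincides with $\langle\cdot,\cdot\rangle_A$ on $\h$, the inequality $\XX_{10}^*\XX_{10}\le 1-\XX_{00}^2$ holds between $A$-symmetric operators on $\h_0$; this is the quantitative statement that the degeneracy of $1-\XX_{00}^2$ is dominated by that of $\XX_{10}$, and it is what forces the ``quotient'' assembled into $D_0$ to be regular. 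Concretely, I would realize $D_0$ as a limit, in the norm $|B|=\max\{\|B\|,\|B^{\sharp}\|\}$ of $\b_A(\h_1)$, of the regularized operators $D_0^{\epsilon}:=-\XX_{10}\XX_{00}(1-\XX_{00}^2+\epsilon)^{-1}\XX_{10}^*$ for $\epsilon>0$: show that each $D_0^{\epsilon}$ is an $A$-symmetric element of $\b_A(\h_1)$ (here the cancellation $\XX_{10}^*\XX_{10}\le 1-\XX_{00}^2$ is used to see that $D_0^{\epsilon}$ is assembled from operators that do preserve $\h$), bound $|D_0^{\epsilon}-D_0^{\epsilon'}|$ by means of the same inequality, and pass to the limit; an alternative is to retrace Krein's original defect-operator/graph argument step by step, checking at each stage that the intermediate subspaces and operators remain inside $\h$. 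Either way, the main obstacle is precisely this: matching the vanishing of $1-\XX_{00}^2$ against that of $\XX_{10}$ so that the a priori singular pieces cancel and the finished operator $D_0$ — though not its individual factors — is $\h$-preserving.
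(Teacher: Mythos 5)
Your reduction to a $2\times 2$ corner-completion problem and the formula $D_0=-V\XX_{00}V^{*}$ (with $\XX_{10}=V(1-\XX_{00}^{2})^{1/2}$ from Douglas' theorem and the Julia-type unitary) correctly solve the problem \emph{in} $\b(\l)$; this is the classical Krein--Parrott step and is fine. The gap is exactly where you locate ``the real work'': the argument offered for $D_0(\h_1)\subset\h_1$ does not go through. Your regularization $D_0^{\epsilon}=-\XX_{10}\XX_{00}(1-\XX_{00}^{2}+\epsilon)^{-1}\XX_{10}^{*}$ contains the factor $(1-\XX_{00}^{2}+\epsilon)^{-1}$, and an operator which preserves $\h$ and is invertible in $\b(\l_0)$ need not have an $\h$-preserving inverse: $\XX_{00}|_{\h_0}$ is bounded on $\h_0$ by the closed graph theorem, but its $\h$-operator norm may be much larger than $1$, so $1-\XX_{00}^{2}+\epsilon$ need not be invertible in $\b(\h_0)$ and no Neumann series is available there. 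Likewise, the inequality $\XX_{10}^{*}\XX_{10}\le 1-\XX_{00}^{2}$, which you invoke to make the singular factors ``cancel,'' is an inequality of quadratic forms for the $\l$- (equivalently $A$-) inner product; it gives no control on operator norms in $\b(\h_0)$ or $\b(\h_1)$, so neither the membership $D_0^{\epsilon}(\h_1)\subset\h_1$ nor the Cauchy estimate for $|D_0^{\epsilon}-D_0^{\epsilon'}|$ follows from it. As written, the limiting argument restates the difficulty rather than resolving it.

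The paper's proof handles precisely this point by a rescaling device that your sketch is missing: one replaces $\XX$ by $\XX_m=\frac{1}{m}\XX$ and runs Krein's construction (there in the form of the auxiliary inner product $[\varphi,\gamma]=\langle(1-\XX_m\PP\XX_m)\varphi,\gamma\rangle_{\l}$ rather than an explicit $V$, but that difference is inessential). For $m$ large enough, the perturbations occurring inside every inverse that appears --- $(1-\XX_m\PP\XX_m)^{-1}$, the oblique projection $\Pi$ onto $\l_0$ associated with $[\ \,,\ ]$, and the adjoints $J^{*}$ and $\b^{*}$ --- have $\h$-operator norm strictly less than $1$, so those inverses exist in $\b(\h)$ by Neumann series and hence preserve $\h$. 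Because the two requirements $\ZZ\PP=\XX\PP$ and $\|\ZZ\|=\|\ZZ\PP\|$ are homogeneous in $\XX$, the scaling is undone at the end by setting $\ZZ=m\ZZ_m$. Your block-matrix formulation can be salvaged by the same trick (rescale so that $1-\frac{1}{m^{2}}\XX_{00}^{2}$ is invertible in $\b(\h_0)$ before assembling the corner, then scale back), but without some such quantitative control in the $\h$-norm the existence of an $\h$-preserving completion $D_0$ remains unproved.
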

\begin{proof}
For $m>0$, let us denote by $\XX_m=\frac{1}{m}\XX$,  $m$ will be adjusted in the process. Denote by  $\h_0=\PP(\h)$. Let $\BB_0=\PP\XX_m\PP:\l_0\to\l_0$. Then $\BB_0$ extends to an operator in $\l$ with the same norm: $\bar{\BB}_0=\PP\XX_m:\l\to\l_0$. Note that $\bar{\BB}_0$ leaves $\h$ invariant, because $\XX_m$ and $\PP$ do.

Next  extend $\BB_1:=\PP^\perp\XX_m\PP:\l_0\to\l_0^\perp$ to the whole space $\l$ (not enlarging the norm of $\BB_1$ and leaving $\h$ invariant). Consider the inner product
$$
[\varphi, \gamma]:=\langle\varphi,\gamma\rangle_\l-\langle\bar{\BB}_0\varphi,\bar{\BB}_0\gamma\rangle_\l.
$$
Note that
$$
[\varphi, \gamma]:=\langle\varphi,\gamma\rangle_\l-\langle\PP\XX_m\varphi,\PP\XX\gamma\rangle_\l=\langle(I-\XX_m\PP\XX_m)\varphi,\gamma\rangle_\l.
$$
Take $m>0$ so that $\|\XX_m\PP\XX_m\|<1$, then $I-\XX_m\PP\XX_m$ is (positive and) invertible. Then $(\l,[\ \ , \  \ ])$ is a Hilbert space, whose norm is equivalent to the original norm induced by $\langle \ \ , \  \ \rangle_\l$. The operator $\BB_1$ is bounded by   (for $\varphi=\PP\varphi\in\l_0$)
$$
\|\BB_1\varphi\|^2=\langle \PP^\perp\XX_m\PP\varphi,\PP^\perp\XX_m\PP\varphi\rangle_\l=\langle\XX_m(1-\PP)\XX_m\PP\varphi,\PP\varphi\rangle_\l
$$
$$
=\langle\XX_m^2\PP\varphi,\PP\varphi\rangle_\l-\langle\XX_m\PP\XX_m\PP\varphi,\PP\varphi\rangle_\l.
$$
Since $\|\XX\PP\|=\frac{1}{m}$, the first term is bounded by 
$$
\langle\XX_m^2\PP\varphi,\PP\varphi\rangle_\l=\langle\PP\XX_m^2\PP\varphi,\varphi\rangle_\l\le\|\PP\XX_m^2\PP\|\|\varphi\|^2=\frac{1}{m^2}\|\varphi\|^2.
$$ 
Pick $m$ so that $m\ge 1$, one has that 
$$
\|\BB_1\varphi\|^2\le \frac{1}{m^2}\langle\varphi,\varphi\rangle_\l-\langle\XX_m\PP\XX_m\varphi,\varphi\rangle_\l \le \frac{1}{m^2}\langle\varphi,\varphi\rangle_\l-\frac{1}{m^2}\langle\XX_m\PP\XX_m\varphi,\varphi\rangle_\l=\frac{1}{m^2}[\varphi,\varphi].
$$
Therefore $\BB_1$ induces an operator $\b:(\l_0,[\ \ , \ \ ])\to (\l_0^\perp, \langle\ \ , \  \ \rangle_\l)$ with norm less than or equal to $\frac{1}{m}$. Since at the set level, this mapping coincides with $\BB_1$, $\b$ maps $\h_0$ into $\l_0^\perp\cap\h$. Let $\Pi:(\l,[\ \ , \ \ ])\to (\l_0, [\ \ , \ \ ])$ denote the $[\ \ , \ \ ]$-orthogonal projection. Since $[ \ \ ,  \ \ ]=\langle (1-\XX_m\PP\XX_m) \ \ , \ \ \rangle_\l$, the $[\ \ , \ \ ]$ adjoint of an operator $\t$ acting in $(\l,[\ \ , \ \ ])$ is given by $\t^\sharp=(1-\XX_m\t^*\XX_m)^{-1}\t^*(1-\XX_m\t^*\XX_m)$, with $\t^*$ the $\langle\ \ , \ \ \rangle_\l$- adjoint of $\t$. It is known (see for instance \cite{ando}), that the orthogonal projection onto the range of an idempotent $Q$ in a Hilbert space, is given by the formula
$$
P_{R(Q)}=Q(Q+Q^*-1)^{-1}.
$$
In our case, this implies that 
$$
\Pi=\PP\{\PP+(1-\XX_m\PP\XX_m)^{-1}\PP(1-\XX_m\PP\XX_m)-1\}^{-1}.
$$
If one further adjusts $m>0$, one has that $\Pi(\h)\subset\h$. To shorten the writing, denote by $Q_m$ the idempotent $(1-\XX_m\PP\XX_m)^{-1}\PP(1-\XX_m\PP\XX_m)$. First one needs that $Q_m(\h)\subset\h$.  Since $1-\XX_m\PP\XX_m$ leaves $\h$ invariant, it  induces an   operator in $\h$. If one further enlarges $m$ so that $X_mPX_m=\XX_m\PP\XX_m|_\h$ has norm strictly less than $1$ in $\b(\h)$, $1-X_mPX_m$ will be invertible in $\b(\h)$, i.e. $(1-\XX_m\PP\XX_m)^{-1}$ leaves $\h$ invariant. It follows that $Q_m(\h)\subset\h$.

Next, note the elementary identity
\begin{equation}\label{formula idempotentes}
(\PP+Q_m-1)^2=1-(\PP-Q_m)^2,
\end{equation}
which only uses the fact that $\PP$ and $Q_m$ are idempotents. In view of (\ref{formula idempotentes}), in order to to have that  $(\PP+Q_m-1)^2$, and therefore also $\PP+Q_m-1$, is invertible, it suffices to adjust $m$ so that the norm of $\PP-Q_m$ as an operator restricted to $\h$, is strictly less than $1$. This clearly can be done, since $X_mPX_m\to 0$ in $\b(\h)$ as $m\to +\infty$. It follows that $\Pi(\h)\subset\h$, for $m$ sufficiently large.

Denote by $J:(\l,\langle\ \ , \ \ \rangle_\l) \to (\l, [\ \ , \ \ ])$ the identity mapping. Note that $J$ is contractive:
$$
[\varphi, \varphi]=\langle(1-\XX_m\PP\XX_m)\varphi,\varphi\rangle_\l\le\langle\varphi,\varphi\rangle_\l,
$$
because $\|\XX_m\|<1$. Clearly $J(\h)\subset\h$. Therefore, if  one puts
$$
\bar{\BB}_1:=\b\Pi J:\l\to\l_0^\perp,
$$
where both spaces are considered with their original inner product $\langle\ \ , \  \ \rangle_\l$, one has  that (since $\Pi$ and $J$ are contractive), 
$$
\|\bar{\BB}_1\|\le \frac{1}{m}.
$$
Note also that all the operators involved in this product leave $\h$ invariant, thus $\bar{\BB_1}(\h)\subset \h$. Put
$$
\bar{\BB}=\bar{\BB}_0+\bar{\BB}_1.
$$
It is easily verified that for $\varphi\in\l$ with $\|\varphi\|=1$, 
$$
\|\bar{\BB}\varphi\|^2=\|\bar{\BB}_0\varphi\|^2+\|\bar{\BB}_1\varphi\|^2\le \frac{1}{m^2},
$$
since $\|\bar{\BB}_0\|, \|\bar{\BB}_1\|\le\frac{1}{m}$, $R(\bar{\BB}_0)\subset\l_0$ and $R(\bar{\BB}_1)\subset\l_0^\perp$. 
Note that $\bar{\BB}_0\PP=\PP\XX_m\PP$ and that if $\varphi_0\in\l_0=R(\PP)$,
$$
\bar{\BB}_1\varphi_0=\b\Pi J\varphi_0=\b\varphi_0=\BB_1\varphi_0,
$$
because $\varphi_0\in R(\Pi)$. Thus $\bar{\BB}_1\PP=\BB_1\PP=\PP^\perp\XX_m\PP$.
It follows that 
$$
\bar{\BB}\PP=\bar{\BB}_0\PP+\bar{\BB}_1\PP=\PP\XX_m\PP+\PP^\perp\XX_m\PP=\XX_m\PP.
$$
Clearly  $\bar{\BB}(\h)\subset\h$. The last thing to fix is that $\bar{\BB}$ is not selfadjoint.   

Note that
$$
\PP\bar{\BB}=\PP\bar{\BB}_0=\PP\XX_m,
$$ 
and then  $\bar{\BB}^*\PP=\XX_m\PP$.  
$$
\ZZ_m:=\frac12(\bar{\BB}+\bar{\BB}^*)
$$
does the feat: $\ZZ_m^*=\ZZ_m$, $\ZZ_m\PP=\XX_m\PP$, $\|\ZZ_m\|\le \|\bar{\BB}\|$. In this case one needs  further to verify that $\bar{\BB}^*(\h)\subset\h$.
Note that 
$$
\bar{\BB}^*=\XX_m\PP+J^*\Pi^*\b^*,
$$
with the adjoints taken in their respective spaces. Since $\Pi$ is an orthogonal projection, $\Pi^*=\Pi$. Note that the adjoint of $J$ is given by
$$
\langle\varphi, J^*\eta\rangle_\l=[J\varphi,\eta]=\langle(1-\XX_m\PP\XX_m)\varphi,\eta\rangle_\l=\langle\varphi,(1-\XX_m\PP\XX_m)\eta\rangle_\l,
$$
 i.e., $J^*:(\l,[\ \ , \ \ ])\to (\l, \langle\  \ , \ \ \rangle_\l)$ is $J^*=(1-\XX_m\PP\XX_m)$. 
The adjoint of 
$$
\b:(\l_0,[\ \ , \ \ ])\to (\l_0^\perp, \langle\ \ , \  \ \rangle_\l)
$$
is given by
$$
[\b^*\varphi,\eta]=\langle\varphi, \b\eta\rangle_\l=\langle\varphi, \PP^\perp\XX_m\PP\eta\rangle_\l=\langle(1-\XX_m\PP\XX_m)^{-1}(1-\XX_m\PP\XX_m)\varphi,\PP^\perp\XX_m\PP\eta\rangle_\l
$$
$$
=[\varphi,(1-\XX_m\PP\XX_m)^{-1}\PP^\perp\XX_m\PP\eta],
$$ 
i.e., $\b^*=(1-\XX_m\PP\XX_m)^{-1}\PP^\perp\XX_m\PP$, which leaves $\h$ invariant for the appropriate election of $m$.

It follows that $\ZZ_m(\h)\subset\h$. Finally,  $m$ is disposed of: 
$\ZZ=m\ZZ_m$  is selfadjoint, satisfies $\ZZ\PP=\XX\PP$, $\ZZ$ leaves $\h$ invariant, and
$$
1=\|\ZZ\PP\|\le\|\ZZ\|\le 1,
$$
i.e. $\|\ZZ\|=1$.  
\end{proof}

\section{Minimality of curves in $\ii_A^a$ with given initial conditions}

In \cite{arv}, a metric was introduced in the set of isometries of a Hilbert space. If $\ii$ denotes the set of isometries in $\l$, a natural metric in $\ii$ is given by 
$$
d(V,W)=\inf\{\ell(\gamma): \gamma \hbox{ is a smooth curve in } \ii \hbox{ joining } V \hbox{ and } W\},
$$
where 
$$
\ell(\gamma)=\int_I \|\dot{\gamma}(t)\| dt.
$$
Fix $V\in\ii$. A vector $\v$   tangent to $\ii$ at $V$ is the velocity vector $\dot{\gamma}(0)$ of a curve $\gamma\subset\ii$ with $\gamma(0)=V$. A smooth curve $\gamma$ in $\ii$ can be lifted to smooth curve of  unitary operators in $\l$: $\gamma(t)=\Gamma(t)V$ for $\Gamma$ in $\u(\l)$. It follows that $\v=i XV$, for $X^*=X\in\b(\l)$. The main result established in  \cite{arv} (Theorem 2.2) states the following:

Suppose that $\|\v\|=1$, and let $P=VV^*$ be the final projection of $V$. If $Z^*=Z$ satisfies that $\|Z\|=1$ and $ZP=\v P$, then the curve
$$
\delta(t)=e^{itZ}V
$$
has minimal length along its path for $|t|\le\pi$,  among all smooth curves in $\ii$ which join the same endpoints as $\delta$.

Define the metric in $\ii_A^a$, by considering the norm of $\b(\l)$ at every tangent space: if $T\in\ii_A^a$ and $\v\in(T\ii_A^a)_T$, then
$$
|\v|=\|\boldsymbol{\v}\|,
$$
where $\boldsymbol{\v}$ is the extension of $\v$ to $\l$.
Formally, since one has considered the regular structure of $\ii_A^a$ regarding it as a submanifold of $\b_A(\h)$, one is introducing a (constant) distribution of norms in the tangent bundle, consisting of the norm defined by the $A$-inner product. Or, if one does not care about the regular structure of $\ii_A^a$, one just regards $\ii_A^a$ as a subset of the metric space $(\ii,d)$.

A direct consequence of the result cited above and Lemma \ref{krein method} is the following:
\begin{teo}\label{minimalidad}
Let $T\in\ii^a_A$, and $\v\in(T\ii_A^a)_T$ with $|\v|=1$. Then there exists a symmetrizable operator $Z\in\b_A(\h)$ such that the curve  
$$
\delta(t)=e^{it\ZZ}\TT|_\h=e^{itZ}T\in\ii_A^a
$$
satisfies that   $\delta(0)=T$,  $\dot{\delta}(0)=\v$ and  $\delta$ has minimal length along its path for $|t|\le\pi$,  among all smooth curves in $\ii^a_A$ which join the same endpoints as $\delta$. In fact, $\delta$ is minimal also in the bigger manifold $\ii$. 
\end{teo}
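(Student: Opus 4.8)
The plan is to deduce the theorem from three facts already available, transported between $\h$ and its completion $\l$: the homogeneous space structure of $\ii_A^a$ under $\u_A$ (Theorem~\ref{estructura diferenciable}), the extension Lemma~\ref{krein method}, and the minimality theorem of \cite{arv} quoted above. The only real work is to package the tangent data of $\ii_A^a$ at $T$ into the form demanded by Lemma~\ref{krein method}, and then to verify that the curve produced stays inside $\ii_A^a$ with the prescribed $1$-jet.

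First I would note that the extension map $B\mapsto\BB$ is a contraction from $(\b_A(\h),|\ \ |)$ into $\b(\l)$: since $\BB^*$ is the extension of $B^\sharp$, the product $\BB^*\BB$ is the extension of the $A$-symmetric operator $B^\sharp B$, so $\|\BB\|^2=\|\BB^*\BB\|\le\|B^\sharp B\|\le|B|^2$ by the estimate $\|\BB\|\le\|B\|$ for $A$-symmetric $B$ (Section~2). Being linear and bounded, this map sends smooth curves to smooth curves, so a smooth curve in $\ii_A^a$ through $T$ with velocity $\v$ extends to a smooth curve of isometries of $\l$ through $\TT$ with velocity $\boldsymbol{\v}$ (the extension of $\v$). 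Since $\ii_A^a$ is a homogeneous space of $\u_A$ via $\pi_T(G)=GT$ (Theorem~\ref{estructura diferenciable}), and differentiating $G^\sharp G=1$ shows that the Lie algebra of $\u_A$ is the set of $A$-anti-symmetric operators in $\b_A(\h)$, every tangent vector of $\ii_A^a$ at $T$ is of the form $\v=iY_0T$ with $Y_0\in\b_A(\h)$ symmetrizable. Hence $\boldsymbol{\v}=i\YY_0\TT$ with $\YY_0=\YY_0^*\in\b(\l)$, $\YY_0(\h)\subset\h$; writing $\PP=\TT\TT^*$ for the final projection of $\TT$,
$$
|\v|=\|\boldsymbol{\v}\|=\|\YY_0\TT\|=\|\YY_0\PP\|=1,
$$
while $P:=\PP|_\h=TT^\sharp\in\p_A$ is the $A$-symmetric idempotent with range $R(T)$ (see the proof of Theorem~\ref{proposicion 22}).

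Next I would apply Lemma~\ref{krein method} with $\XX=\YY_0$ and this $\PP$: it yields a selfadjoint $\ZZ\in\b(\l)$ with $\ZZ(\h)\subset\h$, $\ZZ\PP=\YY_0\PP$ and $\|\ZZ\|=1$. Put $Z=\ZZ|_\h$. As the $A$-inner product on $\h$ is the restriction of $\langle\ ,\ \rangle_\l$, $Z$ is symmetrizable; by the closed graph theorem it is bounded on $\h$, so $Z\in\b_A(\h)$ with $Z^\sharp=Z$. Therefore $e^{itZ}=\sum_{n\ge0}(it)^nZ^n/n!$ converges in $|\ \ |$ and $(e^{itZ})^\sharp=e^{-itZ}$, whence $e^{itZ}\in\u_A$ and $\delta(t)=e^{itZ}T$ is a smooth curve in the orbit $\o_T\subset\ii_A^a$, with extension $\boldsymbol\delta(t)=e^{it\ZZ}\TT$, a curve of isometries of $\l$. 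Clearly $\delta(0)=T$, and since $\PP\TT=\TT$ we get $\ZZ\TT=\ZZ\PP\TT=\YY_0\PP\TT=\YY_0\TT$, hence $\dot\delta(0)=iZT=iY_0T=\v$ (equivalently $\dot{\boldsymbol\delta}(0)=i\ZZ\TT=\boldsymbol{\v}$).

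For minimality, the data $\TT\in\ii$, the unit tangent vector $\boldsymbol{\v}=i\YY_0\TT$, its final projection $\PP$, and the selfadjoint $\ZZ$ with $\|\ZZ\|=1$ and $\ZZ\PP=\YY_0\PP$ are exactly the hypotheses of the quoted Theorem~2.2 of \cite{arv}; hence $\boldsymbol\delta(t)=e^{it\ZZ}\TT$ has minimal length, for $|t|\le\pi$, among all smooth curves of isometries of $\l$ joining its endpoints. To descend to $\ii_A^a$: by the definition of the Finsler metric on $\ii_A^a$, any smooth curve in $\ii_A^a$ has the same length as its extension, which is a smooth curve of isometries of $\l$ with the same endpoints; so minimality of $\delta$ among the latter curves forces minimality among the former, giving both assertions of the theorem. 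The genuinely hard analysis is entirely inside Lemma~\ref{krein method}; the only delicate points here are checking that $\v$ really has the form $iY_0T$ with $Y_0$ \emph{bounded and adjointable} — so that Lemma~\ref{krein method} applies to $\YY_0$ and $e^{itZ}$ makes sense in $\u_A$ — and that the $\ZZ$ delivered by the lemma generates a norm-continuous one-parameter group preserving $\h$, both of which are consequences of the boundedness and closed-graph remarks above.
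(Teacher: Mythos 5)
Your proposal is correct and follows essentially the same route as the paper: write $\v=iXT$ with $X$ symmetrizable, observe $\|\XX\PP\|=\|\XX\TT\|=1$ for the final projection $\PP$ of $\TT$, invoke Lemma \ref{krein method} to produce $\ZZ$ with $\ZZ\PP=\XX\PP$, $\|\ZZ\|=1$ and $\ZZ(\h)\subset\h$, and then apply Theorem 2.2 of \cite{arv} to $e^{it\ZZ}\TT$. The additional verifications you supply (boundedness of $Z=\ZZ|_\h$ on $\h$, membership $e^{itZ}\in\u_A$, and the descent of minimality from $\ii$ to $\ii_A^a$ via equality of lengths) are details the paper leaves implicit, not a different argument.
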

\begin{proof}
The tangent vector $\v$ is of the form $\v=iXT$, for some  $X\in\b(\h)$  symmetrizable operator with $|\v|=\|\XX \TT\|=1$.  We claim that there exists  $\ZZ^*=\ZZ$ with $\|\ZZ\|=1$, $\ZZ\TT=\XX\TT$ and $\ZZ(\h)\subset \h$. Indeed, note that $\ZZ\TT=\XX\TT$ if and only if $\ZZ\PP=\XX\PP$, for $\PP$ the final projection of $\TT$: 
$$
\ZZ\TT=\XX\TT\Longrightarrow\ZZ\PP=\ZZ\TT\TT^*=\XX\TT\TT^*=\XX\PP\Longrightarrow \ZZ\PP\TT=\ZZ\TT=\XX\PP\TT=\XX\PP.
$$
Also note that  
$$
\|\XX\PP\|=\|\XX\TT\TT^*\|\le\|\XX\TT\|=\|\XX\TT\TT^*\TT\|\le\|\XX\TT\TT^*\|=\|\XX\PP\|,
$$
i.e. $\|\XX\PP\|=1$.
If $Z=\ZZ|_\h$, then $\delta(t)=e^{itZ}T$ has minimal length along its path, due to Theorem 2.2 in  \cite{arv}. Clearly $\dot{\delta}(0)=ZT=XT=\v$. 
\end{proof}

\section{Components of $\ii_A^a$ and the action of the restricted group}

The connected components of the set of usual isometries in a Hilbert space are parametrized by the co-rank of the isometries: two isometries lie in the same connected component (i.e., are conjugate by the left action of the unitary group) if and only if they have the same co-rank $0\le n\le+\infty$. This fact follows easily and is a consequence  of the connectedness of the unitary group of a Hilbert space. We do not know if the group $\u_A$ is connected (or how the properties of the operator $A$ determine the components of $\u_A$): we believe that it is an interesting problem. What does hold in general, both for usual or $A$-isometries, is that two isometries are conjugate by the group action if and only if their final projections are conjugate. The argument is essentially contained in the discussion before Theorem \ref{estructura diferenciable}.
\begin{prop}\label{conjugados}
Let $T_1, T_2\in\ii_A^a$, with final $A$-orthogonal idempotents $P_1=T_1T_1^\sharp$, $P_2=T_2T_2^\sharp$. 
\begin{enumerate}
\item
There exists $G\in\u_A$ such that $GT_1=T_2$ if and only if  there exists $H\in\u_A$ such that $HP_1H^{-1}=P_2$.
\item
$T_1$ and $T_2$ lie in the same connected component of $\ii_A^a$ if and only if $P_1$ and $P_2$ lie in the same component of $\p_A$.
\end{enumerate} 
\end{prop}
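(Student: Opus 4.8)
For part (1), the forward implication is an immediate computation: if $G\in\u_A$ and $GT_1=T_2$, then using $G^\sharp=G^{-1}$ one gets $P_2=T_2T_2^\sharp=GT_1T_1^\sharp G^\sharp=GP_1G^{-1}$, so $H=G$ works. Conversely, suppose $HP_1H^{-1}=P_2$ with $H\in\u_A$. Then $HT_1\in\ii_A^a$ and its final projection is $(HT_1)(HT_1)^\sharp=HP_1H^{-1}=P_2$, the same as that of $T_2$. Now I invoke the construction of Section~4 (the operator appearing just before Theorem~\ref{estructura diferenciable}): two elements $T',T''\in\ii_A^a$ with the same final projection $P$ are conjugate by $W:=T''(T')^\sharp+(1-P)\in\u_A$, which satisfies $WT'=T''$. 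Applied to $T'=HT_1$ and $T''=T_2$ this yields $W\in\u_A$ with $W(HT_1)=T_2$, hence $G:=WH\in\u_A$ and $GT_1=T_2$.

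For part (2), the implication ``same component of $\ii_A^a$ $\Rightarrow$ same component of $\p_A$'' is the routine one: the map $q\colon\ii_A^a\to\p_A$, $q(T)=TT^\sharp$, is continuous for the norm $|\cdot|=\max\{\|\cdot\|,\|(\cdot)^\sharp\|\}$ (since $B\mapsto B^\sharp$ is a $|\cdot|$-isometry and multiplication is continuous), so it carries a path joining $T_1$ to $T_2$ to a path joining $P_1$ to $P_2$. For the converse, assume $P_1$ and $P_2$ lie in the same component of $\p_A$. By the Remark recalling the structure of $\p_A$, the $\u_A$-orbit of $P_1$ is a union of connected components of $\p_A$, so it contains $P_2$; fix a continuous path $\alpha\colon[0,1]\to\p_A$ inside this orbit with $\alpha(0)=P_1$, $\alpha(1)=P_2$. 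Using the smooth local cross sections $\sigma_{P_0}$ of the submersions $\pi_{P_0}\colon\u_A\to\p_A$ (which satisfy $\sigma_{P_0}(P_0)=1$) and a partition of $[0,1]$, I patch together a continuous lift $G(\cdot)\colon[0,1]\to\u_A$ with $G(0)=1$ and $G(t)P_1G(t)^{-1}=\alpha(t)$ for all $t$ (on each subinterval $[t_{j-1},t_j]$ take $G(t)=\sigma_{\alpha(t_{j-1})}(\alpha(t))\,G(t_{j-1})$). Then $t\mapsto G(t)T_1$ is a continuous path in $\ii_A^a$ from $T_1$ to $G(1)T_1$, and the final projection of $G(1)T_1$ is $G(1)P_1G(1)^{-1}=P_2$, which equals that of $T_2$.

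The remaining step — joining $G(1)T_1$ to $T_2$ inside $\ii_A^a$, two $A$-adjointable isometries with the \emph{same} final projection $P_2$ — is the crux, and I expect it to be the main obstacle. Via the bijection $S\mapsto S\,(G(1)T_1)^\sharp+(1-P_2)$, the set of all $A$-adjointable isometries with final projection $P_2$ is identified with the group of $A$-unitaries of the compatible subspace $R(P_2)$ (the pointwise stabilizer of $G(1)T_1$ absorbs the $A$-unitaries of $R(P_2)^{\perp_A}$), so the task amounts to producing a path in $\u_A$ from $1$ to some element of the coset $W\cdot\bigl(\{1_{R(P_2)}\}\oplus\u_{A}(R(P_2)^{\perp_A})\bigr)$, where $W=T_2(G(1)T_1)^\sharp+(1-P_2)$ is block-diagonal with respect to the $A$-orthogonal splitting $\h=R(P_2)\oplus R(P_2)^{\perp_A}$. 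One would like to extend a path from $1_{R(P_2)}$ to $W|_{R(P_2)}$ inside the $A$-unitary group of $R(P_2)$ by the identity on $R(P_2)^{\perp_A}$; equivalently, one needs that the $\u_A$-orbit of $T_1$ in $\ii_A^a$ be a \emph{single} connected component rather than merely a union of components as in Theorem~\ref{estructura diferenciable}. This reduction is where all the weight of part (2) sits, and it is tied to the component structure of $\u_A$ and of the $A$-unitary groups of compatible subspaces; the homogeneity machinery of Theorem~\ref{estructura diferenciable} (nearby elements of an orbit are joined to the base point), applied to the orbit through $G(1)T_1$, is the tool I would use to bridge it.
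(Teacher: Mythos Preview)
Your treatment of part (1) matches the paper's proof exactly. For the forward direction of part (2), your direct use of the continuous map $T\mapsto TT^\sharp$ is correct and slightly more economical than the paper's route, which first lifts the path $T(t)$ to $\u_A$ via the submersion $\pi_{T_1}$ and then pushes down via $\pi_{P_1}$.

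The converse of part (2) is where your proposal stops short, and you are right that this is the crux. The paper does not pursue your reduction to the $A$-unitary group of $R(P_2)$; instead, after lifting the path $P(t)$ to a continuous $H(t)\in\u_A$, it writes
\[
K(t)=T_2\,(H(t)T_1)^\sharp+1-P_2
\]
and asserts that $K(t)\in\u_A$ for all $t$ and that $T(t)=K(t)H(t)T_1$ is a continuous path in $\ii_A^a$ joining $T_1$ and $T_2$. Taken at face value, this does not work: using $T_2^\sharp(1-P_2)=0=(1-P_2)T_2$ one computes
\[
K(t)^\sharp K(t)=H(t)P_1H(t)^{-1}+1-P_2=P(t)+1-P_2,
\]
which equals $1$ only when $P(t)=P_2$; and with $H(0)=1$ one gets $T(0)=T_2+(1-P_2)T_1\ne T_1$ in general. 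So the paper's device, as written, does not close the gap you isolated: connecting two elements of $\ii_A^a$ with the \emph{same} final projection $P_2$ still requires showing that the $A$-unitary $T_2(H(1)T_1)^\sharp+1-P_2$ lies in the identity component of $\u_A$, and this is not supplied by Theorem \ref{estructura diferenciable} (which only gives that orbits are \emph{unions} of components). Your diagnosis of the obstacle is accurate; neither your sketch nor the paper's argument completes this step.
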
 
\begin{proof}
If there exists $G\in\ii_A^a$ such that $GT_1=T_2$, then 
$$
P_2=T_2T_2^\sharp=GT_1(GT_1)^\sharp=GT_1T_1^\sharp G^\sharp=GP_1G^{-1}.
$$
Conversely, if there exists $H\in\u_A$ such that $HP_1H^{-1}=P_2$, then $T_1'=HT_1$ and $T_2$ are $A$-isometries with the same final space $P_2$. Consider the operator $K$ given by $K=T_2(T_1')^\sharp+1-P_2$. Note that
$$
KK^\sharp=(T_2(T_1')^\sharp+1-P_2)(T_1'T_2^\sharp+1-P_2)=T_2(T_1')^\sharp T_1'T_2^\sharp+1-P_2,
$$
because $T_2(T_1')^\sharp(1-P_2)=0=(1-P_2)T_1'T_2^\sharp$. Since 
$$
T_2(T_1')^\sharp T_1'T_2^\sharp=T_2T_2^\sharp=P_2,
$$
then $KK^\sharp=1$. Similarly $K^\sharp K=1$, i.e. $K\in\u_A$. Moreover
$$
KT_1'=(T_2(T_1')^\sharp+1-P_2)T_1'=T_2(T_1')^\sharp T_1=T_2.
$$
Therefore, $T_2=KT_1'=KHT_1$, with $KH\in\u_A$.

In order to prove part 2., one uses the fact that both 
$$
\pi_{T_1}:\u_A\to \{GT_1: G\in\u_A\} , \ \pi_{T_1}(G)=GT_1
$$
and
$$
\pi_{P_1}:\u_A\to \{GP_1G^{-1}: G\in\u_A\} , \ \pi_{P_1}(G)=GP_1G^{-1}
$$
C$^\infty$ submersions, and that the orbits $\{GT_1: G\in\u_A\}$ and $\{GP_1G^{-1}: G\in\u_A\}$ are unions of connected components of $\ii_A^a$ and $\p_A$, respectively. Therefore a continuous path $T(t)\in\ii_A^a$ with $T(0)=T_1$ and $T(1)=T_2$ lifts to a continuous path $G(t)\in\u_A$, i.e.,  $T(t)=G(t)T_1$. Then $P(t)=G(t)P_1G^{-1}(t)$ is a continuous path in $\p_A$ joining $P_1$ and $P_2$.

Similarly, if $P(t)$ is a continuous path in $\p_A$, there exists a continuous path $H(t)\in\u_A$ such that $P(t)=H(t)P_1H^{-1}(t)$. Then as in the proof of part 1., 
$$
K(t)=T_2(H(t)T_1)^\sharp+1-P_2\in\u_A
$$
is a continuous path, and $T(t)=K(t)H(t)T_1$ is a continuous path in $\ii_A^a$, joining $T_1$ and $T_2$. 
\end{proof}
In the remaining of this section,  the action of the restricted group
$$
\u_A^\infty:=\{G\in\u_A: G-1 \hbox{ is compact}\}
$$
on $\ii_A^a$ is considered. In \cite{grupo dos normas} it was shown that $\u_A^\infty$ is a $C^\infty$-Banach-Lie group, whose Banach-Lie group is 
$$
\mathfrak{u}_A^\infty=\{iX\in\b_A(\h): X \hbox{ is symmetrizable and compact}\}.
$$
Denote by $\k(\h)$ the space of compact operators in $\h$. In \cite{grupo dos normas} it was also proved that the usual exponential map $\exp(iX)=e^{iX}$,
$$
\exp:\mathfrak{u}_A^\infty\to \u_A^\infty
$$
is surjective. This implies, in particular, that $\u_A^\infty$ is connected. Let us consider the action of the restricted group $\u_A^\infty$ on $\ii_A^a$. The first result characterizes the orbits of the restricted action. Note that these orbits are connected. 
\begin{teo}\label{orbitas restringidas}
Let $T_0\in\ii_A^a$. Then
$$
\{GT_0: G\in\u_A^\infty\}=\{T\in\ii_A^a: T-T_0\in\k(\h)\}.
$$
\end{teo}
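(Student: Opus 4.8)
The plan is to prove the two inclusions separately, the inclusion $\subseteq$ being essentially immediate and the reverse inclusion requiring a lifting argument through the characterization of $\ii_A^a$ as a homogeneous space.

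For $\subseteq$: if $G\in\u_A^\infty$, so that $G-1\in\k(\h)$, then $GT_0-T_0=(G-1)T_0$ is compact because $\k(\h)$ is an ideal in $\b(\h)$; moreover $GT_0\in\ii_A^a$ by the observation at the start of Section 4 that $\u_A$ acts on $\ii_A^a$. One should also note that the difference $GT_0-T_0$ being compact in $\b(\h)$ is the right notion here, since all the operators involved are adjointable and the $\sharp$-operation behaves well. This direction needs only one or two lines.

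For $\supseteq$: suppose $T\in\ii_A^a$ with $T-T_0\in\k(\h)$; we must produce $G\in\u_A^\infty$ with $GT_0=T$. The natural candidate, following exactly the construction before Theorem \ref{estructura diferenciable} and the proof of Proposition \ref{conjugados}, is $G=TT_0^\sharp+(1-P_{T_0})$, where $P_{T_0}=T_0T_0^\sharp$. As verified there, $GG^\sharp=G^\sharp G=1$ and $GT_0=TT_0^\sharp T_0+(1-P_{T_0})T_0=T$, so $G\in\u_A$ and it remains only to check that $G-1$ is compact. Compute
$$
G-1=TT_0^\sharp+(1-P_{T_0})-1=TT_0^\sharp-T_0T_0^\sharp=(T-T_0)T_0^\sharp,
$$
which is compact since $T-T_0\in\k(\h)$ and $\k(\h)$ is an ideal. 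Hence $G\in\u_A^\infty$, and $T=GT_0$ lies in the orbit.

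The main point to be careful about — and the only real obstacle — is the compactness bookkeeping: one must make sure that $G$, which a priori is only known to be in $\b(\h)$, actually lies in $\b_A(\h)$ with $G-1$ compact \emph{in $\h$} (not merely in the weaker sense of the $|\cdot|$-norm on $\b_A(\h)$), so that $G\in\u_A^\infty$ as defined. This is handled by the displayed identity $G-1=(T-T_0)T_0^\sharp$ together with the fact that $G^\sharp=T_0T^\sharp+(1-P_{T_0})$ (obtained by the same algebra), whence $G^\sharp-1=T_0(T^\sharp-T_0^\sharp)$; since $T-T_0$ compact forces $T^\sharp-T_0^\sharp$ compact (both $T,T_0$ being adjointable with $P_{T_0}$ and the relevant pseudo-inverses at hand, one uses $T^\sharp=T_0^\sharp+(T^\sharp-T_0^\sharp)$ and the identity $T^\sharp-T_0^\sharp=T^\sharp(T_0-T)T_0^\sharp$ modulo terms supported on the compatible complements, or more directly that $T^\sharp=\TT^*|_\h$ and $\TT^*$ depends continuously on $\TT$ when both extend the same data), both $G-1$ and $G^\sharp-1$ are compact in $\h$. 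Then $G\in\u_A^\infty$ and the proof is complete.
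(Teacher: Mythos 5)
Your forward inclusion is fine, but the reverse inclusion has a genuine gap: the candidate $G=TT_0^\sharp+(1-P_{T_0})$ is in general \emph{not} $A$-unitary. The computation $HH^\sharp=1$ that you are importing from the discussion before Theorem \ref{estructura diferenciable} and from Proposition \ref{conjugados} is carried out there only for two isometries having the \emph{same} final projection; that hypothesis is what makes the cross terms vanish and turns $T'T'^\sharp$ into $P_{T_0}$. For your $G$ one gets instead
$$
GG^\sharp=TT_0^\sharp T_0T^\sharp+(1-P_{T_0})=P_T+1-P_{T_0},
$$
which equals $1$ only when $P_T=P_{T_0}$. An arbitrary $T\in\ii_A^a$ with $T-T_0\in\k(\h)$ has in general a different range, hence a different final idempotent, so your $G$ satisfies $GT_0=T$ but is not an element of $\u_A$, let alone of $\u_A^\infty$. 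Your closing remarks about $G^\sharp-1$ being compact address a secondary bookkeeping issue while missing this structural one.

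The real content of the reverse inclusion, which your argument bypasses entirely, is showing that the final projections $P_T$ and $P_{T_0}$ are conjugate by an element of the \emph{restricted} group. The paper does this by extending to $\l$: since $T-T_0\in\k(\h)$, the Gohberg--Zambickii theorem (Remark \ref{gohberg}) gives that $\TT-\TT_0$ is compact in $\l$, hence $\TT^*\TT_0=1-\TT^*(\TT-\TT_0)$ is Fredholm of index zero, and therefore $\TT\TT^*\TT_0\TT_0^*:R(\TT_0\TT_0^*)\to R(\TT\TT^*)$ is Fredholm of index zero; by Theorem 6.3 of \cite{compatible grassmannian} this yields $G\in\u_A^\infty$ with $GP_{T_0}G^{-1}=P_T$. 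Only after this step do $T$ and $GT_0$ share the final projection $P_T$, so that $K=T(GT_0)^\sharp+1-P_T$ is a genuine element of $\u_A$ with $KGT_0=T$ and $K-1=(T-GT_0)T_0^\sharp G^{-1}$ compact. You would need to supply this index argument (or an equivalent one) to close the gap.
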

\begin{proof}
If $T=GT_0$ for $G\in\u_A^\infty$, then clearly $T\in\ii_A^a$ and 
$$
T-T_0=GT_0-T_0=(G-1)T_0\in\k(\h).
$$
Conversely, suppose that $T\in\ii_A^a$ satisfies that $T-T_0$ is compact. Then, by  a  theorem of I.C. Gohberg and M.I. Zambickii \cite{gohberg zambickii} (see Remark \ref{gohberg})
the extension $\TT-\TT_0$ is compact in $\l$. 
Then $\TT^*(\TT-\TT_0)=1-\TT^*\TT_0$ is compact, i.e. $\TT^*\TT_0$ is a Fredholm operator in $\l$ with zero index. Then, since $\TT$ is an isometric isomorphism between $\l$ and $R(\TT)=R(\TT\TT^*)$, and $\TT_0^*$ is an isometric isomorphism between $R(\TT_0)=R(\TT_0\TT_0^*)$ and $\l$, it follows that
$$
\TT\TT^*\TT_0\TT_0^*:R(\TT_0\TT_0^*)\to R(\TT\TT^*)
$$ 
is a Fredholm operator of zero index. In \cite{compatible grassmannian} (Theorem 6.3) it was shown this implies that $P_T=TT^\sharp$ and $P_{T_0}=T_0T_0^\sharp$ lie in the same connected component (same orbit) of the restricted Grassmannian: there exists $G\in\u_A^\infty$ such that $GP_{T_0}G^{-1}=P_T$. As in Proposition \ref{conjugados}, 
let $K=T(GT_0)^\sharp+1-P_T \in\u_A$, which satisfies $KGT_0=T$. We claim that $K\in\u_A^\infty$. Indeed,
$$
K-1=T(GT_0)^\sharp-P_T=TT_0^\sharp G^{-1} -GT_0T_0^\sharp G^{-1}=(T-GT_0)T_0^\sharp G^{-1},
$$
and since $G\in\u_A^\infty$ is of the form $G=1+C$ with $C\in\k(\h)$, 
$$
T-GT_0=T-(1+C)T_0=T-T_0-CT_0\in\k(\h).
$$
\end{proof}
With a similar argument as the one used to prove that $\ii_A^a$ has differentiable structure, it can be shown that the orbit of $T_0$ under the action on $\u_A^\infty$ is a $C^\infty$-manifold and a homogeneous space of this group.
\begin{prop}\label{variedad restringida}
Given a fixed $T_0\in\ii_A^a$, the set $\{T\in\ii_A^a: T-T_0\in\k(\h)\}$ is a complemented $C^\infty$-submanifold of the affine space $T_0+\b_A(\h)\cap\k(\h)$, and the onto map
$$\pi^\infty_{T_0}:\u_A^\infty\to \{T\in\ii_A^a: T-T_0\in\k(\h)\} , \ \pi^\infty_{T_0}(G)=GT_0
$$
is a $C^\infty$-submersion
\end{prop}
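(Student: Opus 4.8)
The plan is to repeat, almost verbatim, the argument used for Theorem~\ref{estructura diferenciable}, now carrying the extra requirement of compactness and replacing $\u_A$ throughout by $\u_A^\infty$. Write $\k_A(\h):=\b_A(\h)\cap\k(\h)$, with the norm $|\cdot|$; this is the Banach space on which the Banach--Lie group $\u_A^\infty$ is modelled (see \cite{grupo dos normas}), so that $T_0+\k_A(\h)$ is exactly the affine space in the statement, and $\CC 1\oplus\k_A(\h)$ is a unital Banach subalgebra of $\b_A(\h)$ whose invertible group contains $\u_A^\infty$. The group $\u_A^\infty$ acts on $T_0+\k_A(\h)$ by the affine (hence $C^\infty$) action $G\cdot T=GT$, since $GT-T_0=(G-1)T_0+(T-T_0)\in\k_A(\h)$, and by Theorem~\ref{orbitas restringidas} the orbit of $T_0$ under this action is exactly $\{T\in\ii_A^a:T-T_0\in\k(\h)\}$. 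So, invoking Lemma~\ref{lema auxiliar} with $\a=\b_A(\h)$, $\u=\u_A^\infty$, $X=T_0+\k_A(\h)$ and $x_0=T_0$, it suffices to produce a continuous local cross section of $\pi^\infty_{T_0}$ near $T_0$ that extends to a $C^\infty$ map, on a $|\cdot|$-neighbourhood of $T_0$ in $T_0+\k_A(\h)$, with values in the invertible group of $\CC 1\oplus\k_A(\h)$.

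First I would build this cross section by restricting formula~(\ref{sigma}). For $T=T_0+C$ with $C:=T-T_0\in\k_A(\h)$ small, put $P_T:=TT^\sharp\in\p_A$; a direct expansion gives
$$
P_T-P_{T_0}=T_0C^\sharp+CT_0^\sharp+CC^\sharp\in\k_A(\h),
$$
and $T\mapsto P_T$ is $C^\infty$ as a map into $P_{T_0}+\k_A(\h)$. In particular, when $T$ lies in the orbit, $P_T$ belongs to the restricted compatible Grassmannian $\{P\in\p_A:P-P_{T_0}\in\k(\h)\}$, which by \cite{compatible grassmannian} is a homogeneous space of $\u_A^\infty$: the map $G\mapsto GP_{T_0}G^{-1}$ has a $C^\infty$ local cross section $P\mapsto G_P\in\u_A^\infty$ near $P_{T_0}$, with $G_{P_{T_0}}=1$, which extends to a $C^\infty$ map into the invertibles of $\CC 1\oplus\k_A(\h)$; in particular $G_{P_T}-1\in\k_A(\h)$. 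Now set, as in~(\ref{sigma}),
$$
\sigma^\infty_{T_0}(T):=TT_0^\sharp+G_{P_T}(1-P_{T_0}).
$$
The computations preceding Theorem~\ref{estructura diferenciable} apply unchanged and yield $\sigma^\infty_{T_0}(T)\in\u_A$ and $\sigma^\infty_{T_0}(T)\,T_0=T$; the one new point is the identity
$$
\sigma^\infty_{T_0}(T)-1=CT_0^\sharp+(G_{P_T}-1)(1-P_{T_0}),
$$
whose right-hand side lies in $\k_A(\h)$, so $\sigma^\infty_{T_0}(T)\in\u_A^\infty$ and $\sigma^\infty_{T_0}$ is a cross section of $\pi^\infty_{T_0}$.

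Then I would read off extendability and apply Lemma~\ref{lema auxiliar}. For $T$ varying over a $|\cdot|$-neighbourhood of $T_0$ in the affine space $T_0+\k_A(\h)$ (no longer assumed to lie in the orbit), $T\mapsto P_T$ is $C^\infty$ with values near $P_{T_0}$ in $P_{T_0}+\k_A(\h)$, so composing with the extended Grassmannian section gives a $C^\infty$ map $T\mapsto G_{P_T}$; by the last displayed identity $T\mapsto\sigma^\infty_{T_0}(T)$ is then a $C^\infty$ map into $1+\k_A(\h)\subset\CC 1\oplus\k_A(\h)$ whose value at $T_0$ is $1$, hence invertible in $\CC 1\oplus\k_A(\h)$ after shrinking the neighbourhood. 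Lemma~\ref{lema auxiliar} now gives that $\{T\in\ii_A^a:T-T_0\in\k(\h)\}$ is a complemented $C^\infty$-submanifold of $T_0+\k_A(\h)$ and that $\pi^\infty_{T_0}$ is a $C^\infty$ submersion.

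I expect the only genuine work to be the compactness bookkeeping underlying the above: that $\k_A(\h)=\b_A(\h)\cap\k(\h)$ is $|\cdot|$-closed --- equivalently, that the $\sharp$-adjoint of a compact $A$-adjointable operator is again compact, which one extracts from Remark~\ref{gohberg} by passing to $\l$ and back, or reads off directly from the construction of $\u_A^\infty$ in \cite{grupo dos normas} --- and that the local cross section of the restricted compatible Grassmannian in \cite{compatible grassmannian} is genuinely $C^\infty$, $(\CC 1\oplus\k_A(\h))$-valued, and normalised so that $G_{P_{T_0}}=1$. Granting these, everything else is word for word the proof of Theorem~\ref{estructura diferenciable}.
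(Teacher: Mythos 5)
Your proposal is correct and follows essentially the same route as the paper: both invoke Lemma~\ref{lema auxiliar} over the unital algebra $\mathbb{C}1+\b_A(\h)\cap\k(\h)$, use the restricted compatible Grassmannian result from \cite{compatible grassmannian} to get $G_{P_T}\in\u_A^\infty$, and restrict the cross section of Theorem~\ref{estructura diferenciable} (your $TT_0^\sharp+G_{P_T}(1-P_{T_0})$ coincides with the paper's $KG$ via $G(1-P_{T_0})=(1-P_T)G$). Your explicit check that $\sigma^\infty_{T_0}(T)-1\in\b_A(\h)\cap\k(\h)$ is a detail the paper leaves implicit, but it is the same argument.
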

\begin{proof}
We shall use Lemma \ref{lema auxiliar}. Consider the Banach algebra $\b=\mathbb{C}1+\b_A(\h)\cap\k(\h)$, with the norm $|B|=\max\{\|B\|,\|B\|^\sharp\}$. The group $\u_A^\infty$ is a subgroup of the Banach-Lie group of invertible operators in $\b$. The element
$$
K=T(GT_0)^\sharp+1-P_T \in\u_A^\infty
$$
defines a map - in fact a cross section - on a neighbourhood of $T_0$ in $\{T\in\ii_A^a: T-T_0\in\k(\h)\}$, with values in $\u_A^\infty$. Indeed, the operator $G$ is also a $C^\infty$ map in the argument $T$ (because of the regular structure of the restricted orbit of $P_{T_0}$ under the action of $\u_A^\infty$ \cite{compatible grassmannian}). Note that  the map $T\mapsto P_T=TT^\sharp$ is $C^\infty$, it is the restriction (to the orbit of $T_0$) of the global $C^\infty$ map $B\mapsto BB^\sharp$ of the ambient algebra $\b$. Clearly, then,  the local cross section $T\mapsto KG$ extends to a $C^\infty$ map defined on a neighbourhood of $T_0$ in $\b$. 

The space on which $\u_A^\infty$ acts by left multiplication, namely $\{X\in\b_A(\h): X-T_0\in\k(\h)\}$, is not a Banach space. It is the {\it affine} Banach space $T_0+\b_A(\h)\cap\k(\h)$. Therefore the argument proceeds, with slight modifications. 
\end{proof}

{\bf Aknowledgements}

I wish to thank Mar\'\i a Eugenia Di Iorio y Lucero for many helpful comments and suggestions.

This work was supported by the grant PICT 2019 04060 (FONCyT - ANPCyT, Argentina)

\end{document}